\setlist[itemize]{leftmargin=*}
\setlist[enumerate]{leftmargin=*}
\newcommand{\sgn}{\operatorname{sgn}}
\renewcommand{\Pi}{\mathbb{P}}
\newtheorem{theorem}{Theorem}
\newtheorem{lemma}[theorem]{Lemma}
\theoremstyle{definition}
\numberwithin{equation}{section}
\renewcommand{\phi}{\varphi}
\newcommand{\q}{\mathfrak{q}}
\renewcommand{\pmod}[1]{\,\,(\operatorname{mod} #1)}
\renewcommand{\geq}{\geqslant}
\renewcommand{\leq}{\leqslant}
\let\oldenumerate=\enumerate
	\def\enumerate{
	\oldenumerate
	\setlength{\itemsep}{5pt}
	}
\let\olditemize=\itemize
	\def\itemize{
	\olditemize
	\setlength{\itemsep}{5pt}
	}
\title[Primitive root biases for prime pairs I]{Primitive root biases for prime pairs I:\\  existence and non-totality of biases}
\author[S.R.~Garcia]{Stephan Ramon Garcia}
\address{Department of Mathematics\\Pomona College\\610 N. College Ave., Claremont, CA 91711} 
\email{stephan.garcia@pomona.edu}
\urladdr{http://pages.pomona.edu/~sg064747}
\thanks{SRG supported by NSF grant DMS-1265973 and the Budapest Semesters in Mathematics (BSM) Director's Mathematician in Residence (DMiR) program.
SRG and TS supported by a David L. Hirsch III and Susan H. Hirsch Research Initiation Grant.
FL was supported in part by grants CPRR160325161141 and an A-rated researcher award both from the NRF of South Africa and by grant no. 17-02804S of the Czech Granting Agency. }
\author[F.~Luca]{Florian Luca}
\address{School of Mathematics\\University of the Witwatersrand\\Private Bag 3, Wits 2050, Johannesburg, South Africa\\
Max Planck Institute for Mathematics, Vivatgasse 7, 53111 Bonn, Germany\\
Department of Mathematics, Faculty of Sciences, University of Ostrava, 30 dubna 22, 701 03
Ostrava 1, Czech Republic}
\email{Florian.Luca@wits.ac.za}
\author[T.~Schaaff]{Timothy Schaaff}
\email{timothyschaaff@gmail.com}
\subjclass[2010]{11A07, 11A41, 11N36, 11N37}
\keywords{prime, twin prime, primitive root, Bateman--Horn conjecture, Twin Prime Conjecture, Brun Sieve, cousin prime, sexy prime, prime bias}
\begin{document}

\begin{abstract}
We study the difference between the number of primitive roots modulo $p$ and modulo $p+k$ for prime pairs $p,p+k$.  Assuming the Bateman--Horn conjecture, we prove the existence of strong sign biases for such pairs.  More importantly, we prove that for a small positive proportion of prime pairs $p,p+k$, the dominant inequality is reversed.
\end{abstract}

\maketitle

\section{Introduction}
Let $k$ be a positive even integer and
suppose that $p$ and $p+k$ are prime.  
Then the difference between the number of primitive roots modulo $p$ and modulo $p+k$ is
\begin{equation*}
	T(p):=\phi(p-1) - \phi(p+k-1).
\end{equation*}
If $T(p) > 0$, then $p$ has more primitive roots than $p+k$ does;
if $T(p) < 0$, then $p$ has fewer primitive roots than $p+k$ does.  
We are interested here in the sign of $T(p)$ as $p$ ranges over
the set of all primes $p$ for which $p+k$ is also prime.

To streamline our presentation, we let $\Pi_k$ denote the set of primes $p$
for which $p+k$ is prime.
For example, $\Pi_2$ is the set of twin primes, $\Pi_4$ is the set of
cousin primes, and $\Pi_6$ is the set of ``sexy primes.''  
We denote by $\Pi_k(x)$ the set of elements in $\Pi_k$ that are at most $x$.  
The number of elements in $\Pi_k(x)$ is denoted by $\pi_k(x)$; this is the counting function
of $\Pi_k$.
That is, $\pi_k(x)$ is the number of primes $p \leq x$ such that $p+k$ is prime. 
In what follows, the letters $p,q,r,s$ are reserved for primes.  

It has long been conjectured that each $\Pi_k$ is infinite
(this appears to date back at least to de Polignac).  For example, the 
\emph{twin-prime conjecture} asserts that $\Pi_2$ is infinite.  There have been
tantalizing steps toward this conjecture in recent years \cite{Maynard, Polymath, ZhangGaps}.
A more refined version of the twin-prime conjecture is the first \emph{Hardy--Littlewood conjecture},
which asserts that $\pi_2$ is asymptotic to a certain constant times $x/(\log x)^2$.
The far-reaching \emph{Bateman--Horn conjecture} (Section \ref{Section:BH}) implies that each
$\Pi_k$ is infinite and provides asymptotics for $\pi_k$ on the order of $x/(\log x)^2$.
The first Hardy--Littlewood conjecture and the twin-prime conjecture both follow from the
Bateman--Horn conjecture.

Our work is inspired by \cite{GKL}, in which a peculiar primitive root bias was discovered 
in the twin prime case $k=2$.
Assuming the Bateman--Horn conjecture, it was proved that at least $65.13\%$ 
of twin prime pairs $p,p+2$ satisfy $T(p) > 0$ and that at least $0.47\%$ satisfy $T(p) < 0$
(numerical evidence suggests that the bias is approximately $98\%$ to $2\%$).
This is interesting for two reasons.  First, a pronounced bias in favor of $T(p) > 0$
exists for twin primes (although relatively easy to motivate from a heuristic standpoint,
the proof is long and involved).
Second, the bias is not total:  the inequality is reversed for a
small positive proportion of the twin primes.

In this paper, we extend the results of \cite{GKL} to prime pairs $p,p+k$.
As before, we assume the Bateman--Horn conjecture.
Although there are some similarities,
many significant complications arise when passing from the case $k=2$ to $k \geq 4$.
\begin{enumerate}
\item The direction and magnitude of the bias in $T(p)$ now depend heavily on 
the value of $k \pmod{3}$ and the smallest primes that do not divide $k$.
If $k \equiv -1 \pmod{3}$, then an overwhelming majority of primes $p \in \Pi_k$
satisfy $T(p) > 0$.  If $k\equiv 1 \pmod{3}$, then the bias is strongly toward $T(p) < 0$.
If $k \equiv 0 \pmod{3}$, then the extreme bias disappears and either sign
can be favored.

\item An elementary lemma in the twin-prime case \cite[Lem.~2]{GKL} that relates the sign of 
$T(p)$ to the sign of a more tractable function fails 
for $k \geq 4$ and must be replaced by a much more difficult asymptotic version 
(Theorem \ref{Theorem:Comparison}).

\item The ``influence'' of the small primes $5$, $7$, and $11$
was sufficient to establish that a positive proportion of twin prime pairs $p,p+2$
satisfy $T(p) < 0$ \cite{GKL}.  This straightforward analysis 
is no longer possible for $k \geq 4$ and we must
introduce several parameters in order to compensate.

\item The tolerances are spectacularly small for certain $k$.
A notable example is $k=14$.
Among the first $20$ million primes there are 
$1{,}703{,}216$ pairs of primes of the form $p,p+14$; see Table \ref{Table:PrimePairList}.  Only three pairs
satisfy $T(p) \leq 0$, a proportion of $1.76 \times 10^{-6}$.
These sorts of numbers give us little room to maneuver.  
\end{enumerate}

A more extreme example is $k=70$.
Among the first $20$ million primes, every prime pair $p,p+70$ satisfies
$T(p) < 0$.  Nevertheless, our approach proves that a tiny positive proportion
(at least $1.81 \times 10^{-20}$) of the primes in $\Pi_{70}$ satisfy $T(p) > 0$.
Even in such lopsided cases, we are able to prove that the biases are not total: the
dominant inequality is reversed for a positive proportion of the primes considered.

\begin{table}\footnotesize
\begin{equation*}
\begin{array}{|c|ccl||c|ccl|}
\hline
k & \# T(p)<0 & \pi_k(x) & \text{Proportion} &
k & \# T(p)<0 & \pi_k(x) & \text{Proportion} \\
\hline
 2 & 28490 & 1418478 & 0.0200849 & 62 & 1980 & 1468111 & 0.00134867 \\
 4 & 1390701 & 1419044 & 0.980027 & 64 & 1416847 & 1418937 & 0.998527 \\
 6 & 1687207 & 2836640 & 0.594791 & 66 & 2187908 & 3153911 & 0.693713 \\
 \hline
 8 & 28771 & 1417738 & 0.0202936 & 68 & 25409 & 1512639 & 0.0167978 \\
 10 & 1891800 & 1891902 & 0.999946 & 70 & 2270424 & 2270424 & 1 \\
 12 & 1441259 & 2837946 & 0.507853 & 72 & 1431789 & 2837200 & 0.504649 \\
  \hline
 14 & 3 & 1703216 & 1.76 \times 10^{-6} & 74 & 64 & 1459313 & 4.39 \times 10^{-5} \\
 16 & 1420209 & 1420273 & 0.999955 & 76 & 1502310 & 1502338 & 0.999981 \\
 18 & 1433488 & 2837906 & 0.505122 & 78 & 1745211 & 3096187 & 0.563665 \\
  \hline
 20 & 4 & 1891034 & 2.12 \times 10^{-6} & 80 & 113 & 1892585 & 5.95 \times 10^{-5} \\
 22 & 1576076 & 1576379 & 0.999808 & 82 & 1426536 & 1455721 & 0.979952 \\
 24 & 1015032 & 2838360 & 0.357612 & 84 & 1145652 & 3404217 & 0.336539 \\
  \hline
 26 & 26521 & 1546675 & 0.0171471 & 86 & 28787 & 1454174 & 0.0197961 \\
 28 & 1699783 & 1702838 & 0.998206 & 88 & 1553144 & 1576531 & 0.985166 \\
 30 & 1930480 & 3784105 & 0.510155 & 90 & 1489160 & 3785003 & 0.393437 \\
  \hline
 32 & 20553 & 1418579 & 0.0144884 & 92 & 29413 & 1486659 & 0.0197846 \\
 34 & 1495332 & 1513933 & 0.987713 & 94 & 1421558 & 1450180 & 0.980263 \\
 36 & 2097416 & 2838465 & 0.738926 & 96 & 1915769 & 2839516 & 0.674682 \\
  \hline
 38 & 21739 & 1502517 & 0.0144684 & 98 & 377 & 1702580 & 0.000221429 \\
 40 & 1891651 & 1891659 & 0.999996 & 100 & 1891334 & 1891337 & 0.999998 \\
 42 & 1727098 & 3405081 & 0.507212 & 102 & 1531067 & 3027395 & 0.505737 \\
  \hline
 44 & 6 & 1576157 & 3.81 \times 10^{-6} & 104 & 9 & 1549054 & 5.81 \times 10^{-6} \\
 46 & 1486910 & 1486946 & 0.999976 & 106 & 1447486 & 1447486 & 1 \\
 48 & 1318068 & 2838746 & 0.464313 & 108 & 1434316 & 2838777 & 0.505258 \\
  \hline
 50 & 48 & 1891847 & 2.54 \times 10^{-5} & 110 & 16 & 2101919 & 7.61 \times 10^{-6} \\
 52 & 1525943 & 1548356 & 0.985525 & 112 & 1699877 & 1702796 & 0.998286 \\
 54 & 933772 & 2839928 & 0.328801 & 114 & 1051285 & 3004570 & 0.349895 \\
  \hline
 56 & 2272 & 1701628 & 0.00133519 & 116 & 22762 & 1471017 & 0.0154736 \\
 58 & 1447184 & 1472758 & 0.982635 & 118 & 1418455 & 1442208 & 0.98353 \\
 60 & 1939665 & 3783957 & 0.512602 & 120 & 2269102 & 3784749 & 0.599538 \\
 \hline
\end{array}
\end{equation*}
\caption{The proportion of prime pairs $p,p+k$ among the first $20$ million primes
for which $p$ has fewer primitive roots than $p+k$ does.
Extreme biases occur for $k \not \equiv 0 \pmod{3}$ 
(see Theorem \ref{thm:kpm1mod3}); the situation is more balanced
if $k \equiv 0 \pmod{3}$ (see Theorem \ref{thm:k0mod3}).}
\label{Table:PrimePairList}
\end{table}

This paper is organized as follows.
Section \ref{Section:BH} introduces the Bateman--Horn conjecture and a closely-related
unconditional result that is necessary for our work.
Section \ref{Section:Comparison} concerns a ``totient comparison theorem'' 
(Theorem \ref{Theorem:Comparison}) that permits us
to consider a more convenient function $S(p)$ in place of $T(p)$.
The short Section \ref{Section:Heuristic} contains an heuristic argument that
explains the dependence of our results upon the value of $k \pmod{3}$.
For $k \not \equiv 0 \pmod{3}$, the heuristic argument is turned into a rigorous, quantitative theorem in
Section \ref{Section:k1}, which contains our main result (Theorem \ref{thm:kpm1mod3}).
Although it is too technical to state here, Theorem \ref{thm:kpm1mod3} proves the following.
\begin{enumerate}
\item For $k \not \equiv 0 \pmod{3}$,
strong primitive root biases exist for prime pairs $p,p+k$. 

\item The biases are not total:  the dominant inequality is reversed for a 
positive proportion of prime pairs $p,p+k$.
\end{enumerate}
We conclude in Section \ref{Section:k0} with an analogous theorem 
(Theorem \ref{thm:k0mod3}) for $k \equiv 0 \pmod{3}$.
In this case, we prove that substantial positive proportions of $p \in \Pi_k$
satisfy $T(p) > 0$ and $T(p) < 0$, respectively.  Thus, the extreme biases observed
in the $k \not \equiv 0 \pmod{3}$ setting disappear.

\section{The Bateman--Horn conjecture and Brun's sieve}\label{Section:BH}

Let $f_1,f_2,\ldots,f_m$ be a collection of distinct irreducible polynomials with integer coefficients and
positive leading coefficients.  An integer $n$ is \emph{prime generating} for 
this collection if each $f_1(n), f_2(n),\ldots, f_m(n)$ is prime. 
Let $P(x)$ denote
the number of prime-generating integers at most $x$ and
suppose that $f = f_1f_2\cdots f_m$ does not vanish identically modulo any prime.
The \emph{Bateman--Horn conjecture} asserts that
\begin{equation*}
P(x)\, \sim\,  \frac{C}{D} \int _{2}^{x}\frac{dt}{(\log t)^m},
\end{equation*}
in which 
\begin{equation*}
D = \prod_{i=1}^m \deg f_i
\quad \text{and} \quad
C=\prod_{p} \frac{1-N_f(p)/p}{(1-1/p)^{m}},
\end{equation*}
in which $N_f(p)$ is the number of solutions to $f(n) \equiv 0 \pmod{p}$ \cite{Bateman}. 
For simplicity, we prefer the asymptotically equivalent expression
\begin{equation*}
	\frac{Cx}{D(\log x)^m}.
\end{equation*}

For a fixed $k$, let
\begin{equation}\label{eq:f}
	f(t) \,=\, t(t+k),
\end{equation}
so that
\begin{equation}\label{eq:BHNfp}
	N_f(p) \,=\, \begin{cases}
		1 & \text{if } p |  k, \\
		2 & \text{if }p \nmid k.
	\end{cases}
\end{equation}
The Bateman--Horn conjecture predicts that 
\begin{equation*}
	\pi_k(x) 
	\, \sim\, \prod_{p |  k} \frac{p(p-1)}{(p-1)^2} 
	\prod_{p \nmid k} \frac{p(p-2)}{(p-1)^2} \frac{x}{(\log x)^2} 
	\,=\, \frac{C_k x}{(\log x)^2},
\end{equation*}
in which
\begin{equation*}
	C_k = \prod_{p |  k} \frac{p(p-1)}{(p-1)^2} \prod_{p \nmid k} \frac{p(p-2)}{(p-1)^2}
\end{equation*}
depends only on upon the primes that divide $k$; see Table \ref{Table:CkList}.
For example, $C_k \approx 1.32032$ whenever $k$ is a power of $2$.  In particular, 
$C_k/2 \approx 0.660162$ is the \emph{twin-primes constant}. 

\begin{table}\small
\begin{equation*}
\begin{array}{|c|l||c|l||c|l||c|l|}
\hline
k & \multicolumn{1}{c||}{C_k} & k & \multicolumn{1}{c||}{C_k} & k & \multicolumn{1}{c||}{C_k} & k & \multicolumn{1}{c|}{C_k} \\
\hline\hline
 2 & 1.32032 & 32 & 1.32032 & 62 & 1.36585 & 92 & 1.3832 \\
 4 & 1.32032 & 34 & 1.40835 & 64 & 1.32032 & 94 & 1.34966 \\
 6 & 2.64065 & 36 & 2.64065 & 66 & 2.93405 & 96 & 2.64065 \\
 8 & 1.32032 & 38 & 1.39799 & 68 & 1.40835 & 98 & 1.58439 \\
 10 & 1.76043 & 40 & 1.76043 & 70 & 2.11252 & 100 & 1.76043 \\
 12 & 2.64065 & 42 & 3.16878 & 72 & 2.64065 & 102 & 2.81669 \\
 14 & 1.58439 & 44 & 1.46703 & 74 & 1.35805 & 104 & 1.44035 \\
 16 & 1.32032 & 46 & 1.3832 & 76 & 1.39799 & 106 & 1.34621 \\
 18 & 2.64065 & 48 & 2.64065 & 78 & 2.88071 & 108 & 2.64065 \\
 20 & 1.76043 & 50 & 1.76043 & 80 & 1.76043 & 110 & 1.95604 \\
 22 & 1.46703 & 52 & 1.44035 & 82 & 1.35418 & 112 & 1.58439 \\
 24 & 2.64065 & 54 & 2.64065 & 84 & 3.16878 & 114 & 2.79598 \\
 26 & 1.44035 & 56 & 1.58439 & 86 & 1.35253 & 116 & 1.36922 \\
 28 & 1.58439 & 58 & 1.36922 & 88 & 1.46703 & 118 & 1.34349 \\
 30 & 3.52086 & 60 & 3.52086 & 90 & 3.52086 & 120 & 3.52086 \\
 \hline
\end{array}
\end{equation*}
\caption{Numerical approximations of the Bateman--Horn constant $C_k$.}
\label{Table:CkList}
\end{table}

Although weaker than the Bateman--Horn conjecture, 
the Brun sieve \cite[Thm.~3, Sect.~I.4.2]{Tenenbaum} suffices for many applications.
It does, however, have the distinct advantage of being a proven fact, rather than a long-standing
conjecture. The Brun sieve implies that there
is a constant $B$ that depends only on $m$ and $D$ such that
\begin{equation*}
P(x)\leq  \frac{BC}{D}\int_2^x \frac{dt}{(\log t)^m}
=(1+o(1))\frac{BC}{D} \frac{x}{(\log x)^m} 
\end{equation*}
for sufficiently large $x$.  
In particular, there is a constant $K$ such that
\begin{equation*}
\pi_k(x)\,\leq\, K\frac{ C_kx}{(\log x)^2}
\end{equation*}
for all $k$ and sufficiently large $x$. 
Thus, the Brun sieve implies that the upper bound on $\pi_k$ implied
by the Bateman--Horn conjecture is of the correct order of magnitude.

\section{Totient comparison theorem}\label{Section:Comparison}

The well-known formula
\begin{equation}\label{eq:Phin}
	\frac{\phi(n)}{n} = \prod_{q |  n} \left( 1 - \frac{1}{q} \right).
\end{equation}
depends only on the primes that divide $n$ and not on their multiplicity.
Because of this, we find it more convenient to work with
\begin{equation*}
S(p) := \frac{\phi(p-1)}{p-1} - \frac{\phi(p+k-1)}{p+k-1}
\end{equation*}
instead of the more obvious quantity
\begin{equation*}
T(p)=\phi(p-1) - \phi(p+k-1).
\end{equation*}
We are able to do this because
the sign of $T(p)$ almost always agrees with the sign of $S(p)$.
For $k=2$, elementary considerations confirm that
$S(p)T(p) > 0$ for $p\geq 5$ \cite[Lem.~2]{GKL}.
For $k \geq 4$, the result is more difficult.  We require several
lemmas before we obtain an asymptotic analogue of the desired
result (Theorem \ref{Theorem:Comparison}).

We first need to estimate the number of $p\in \Pi_k(x)$ for which
$S(p)$ or $T(p)$ equals zero.  In both cases, the number is negligble
when compared with $\pi_k(x)$; this is Lemma \ref{Lemma:Equality} below.  
To this end, we need the following result.

\begin{lemma}[Graham--Holt--Pomerance \cite{Graham}]\label{Lemma:GHP}
Suppose that $j$ and $j+k$ have the same prime factors.
Let $g = \gcd(j,j+k)$ and suppose that
\begin{equation}\label{eq:qr}
 \frac{jt}{g} + 1 \qquad \text{and} \qquad \frac{(j+k)t}{g} + 1
\end{equation}
are primes that do not divide $j$.
\begin{enumerate}
\item Then $\displaystyle n = j \left( \frac{(j+k)t}{g} + 1 \right)$ satisfies $\phi(n) = \phi(n+k)$.
\item For $k$ fixed and sufficiently large $x$,
the number of solutions $n \leq x$ to $\phi(n) = \phi(n+k)$ that are not of the form above
is less than $x/\exp( (\log x)^{1/3})$.
\end{enumerate}
\end{lemma}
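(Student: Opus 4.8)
\emph{Part (a).} The plan for part (a) is a direct verification using the fact, recorded in \eqref{eq:Phin}, that $\phi(n)/n$ depends only on the prime factors of $n$. Write $q = \frac{(j+k)t}{g}+1$ and $r = \frac{jt}{g}+1$ for the two primes in \eqref{eq:qr}. A one-line computation gives $n = jq$ and $n+k = (j+k)r$, since $(j+k)r - jq = (j+k)-j = k$. Because $q$ is a prime not dividing $j$, and $r$ is a prime not dividing $j+k$ (the hypothesis $r \nmid j$ together with the assumption that $j$ and $j+k$ have the same prime factors forces $r \nmid j+k$), both factorizations are into coprime parts. Hence $\phi(n) = \phi(j)(q-1)$ and $\phi(n+k) = \phi(j+k)(r-1)$. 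Substituting $q-1 = \frac{(j+k)t}{g}$ and $r-1 = \frac{jt}{g}$, the desired identity $\phi(n) = \phi(n+k)$ collapses to $\frac{\phi(j)}{j} = \frac{\phi(j+k)}{j+k}$, which holds by \eqref{eq:Phin} precisely because $j$ and $j+k$ have the same prime factors.

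\emph{Part (b).} For the counting statement I would first strip away the degenerate cases. Fix the threshold $y = \exp((\log x)^{1/3})$. By standard bounds on smooth numbers (de Bruijn), the number of $n \le x$ for which either $n$ or $n+k$ has no prime factor exceeding $y$ is $\ll x\exp(-(\log x)^{1/3})$, and the number divisible by $p^2$ for some prime $p>y$ is $\ll x/y$; both lie within the claimed bound. After discarding these, every remaining solution factors as $n = aP$ and $n+k = bQ$, where $P = P^+(n)$ and $Q = P^+(n+k)$ are primes exceeding $y$ appearing to the first power, and $a,b < x/y$. Then $\phi(n) = \phi(a)(P-1)$ and $\phi(n+k) = \phi(b)(Q-1)$, so eliminating $Q$ from $\phi(n)=\phi(n+k)$ and $bQ - aP = k$ yields the linear relation
\begin{equation*}
	n\left(\frac{\phi(a)}{a} - \frac{\phi(b)}{b}\right) \,=\, \phi(a) - \phi(b) + \frac{k\,\phi(b)}{b}.
\end{equation*}
This splits into two cases. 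If the coefficient of $n$ vanishes, then \eqref{eq:Phin} shows $a$ and $b$ have the same prime factors, and the right-hand side then forces $b = a+k$; taking $j = a$ recovers exactly the parametric family of part (a). Otherwise the relation pins $n$ down uniquely in terms of the pair $(a,b)$.

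The hard part, and the main obstacle, will be to bound the ``off-family'' solutions coming from the second case by $x\exp(-(\log x)^{1/3})$. Each such $n$ is determined by its pair $(a,b)$ of small parts, but the crude count of $(x/y)^2$ admissible pairs is hopelessly large. The savings must come from using all the constraints at once: $b\phi(a) - a\phi(b)$ is a \emph{nonzero} integer, yet $P = n/a > y$ forces it to be abnormally small relative to $ab$, so that $\phi(a)/a$ and $\phi(b)/b$ are extraordinarily close without being equal, and moreover both $P$ and the associated $Q = (aP+k)/b$ must be prime. My approach would be to fix $a$, apply a Brun-type upper-bound sieve for the simultaneous primality of $P$ and $Q$ along the resulting progression, and then sum the sieve estimates over the small parts. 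Arranging this summation so that it beats the smooth-number threshold---rather than the lossy $x^2/y^3$ produced by the naive ``window'' argument for $|\phi(a)/a - \phi(b)/b| < C/y$---is the delicate technical core of the result, and is precisely the content of the Graham--Holt--Pomerance theorem.
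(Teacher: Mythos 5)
A preliminary remark: the paper does not prove this lemma --- it is quoted as an external result of Graham--Holt--Pomerance \cite{Graham} and used as a black box in the proof of Lemma \ref{Lemma:Equality}(b) --- so your attempt must be judged on its own merits rather than against an internal proof. Your part (a) holds up completely: writing $q = \frac{(j+k)t}{g}+1$ and $r = \frac{jt}{g}+1$, the identity $(j+k)r - jq = k$ gives $n = jq$ and $n+k = (j+k)r$; the hypothesis $r \nmid j$ together with the assumption that $j$ and $j+k$ have the same prime factors gives $r \nmid (j+k)$; and $\phi(n)=\phi(n+k)$ then reduces via multiplicativity to $\phi(j)/j = \phi(j+k)/(j+k)$, which is \eqref{eq:Phin} applied to two numbers with the same radical. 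That is a correct and complete verification, and it is the intended one.

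Part (b), however, contains a genuine gap, which to your credit you flag yourself. The preparatory reduction is sound: discarding the $n \leq x$ for which $n$ or $n+k$ is $\exp((\log x)^{1/3})$-smooth, or divisible by the square of a prime exceeding that bound, costs less than the allowed $x/\exp((\log x)^{1/3})$; the factorizations $n = aP$, $n+k = bQ$ and the linear relation in $n$ you derive are correct; and the degenerate case $\phi(a)/a = \phi(b)/b$ does force $b = a+k$ and (by the largest-prime-in-the-denominator argument that the paper itself uses in Lemma \ref{Lemma:Equality}(a)) equal radicals, hence the parametric family of part (a) --- though you should also verify that $t := g(P-1)/(a+k)$ is an integer, which follows from $(a+k)(Q-1) = a(P-1)$ and $\gcd(a/g,(a+k)/g)=1$, so that the hypotheses of part (a) are genuinely met. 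But the theorem lives entirely in the remaining case: one must show that the solutions pinned down uniquely by pairs $(a,b)$ with $b\phi(a) \neq a\phi(b)$ number at most $x/\exp((\log x)^{1/3})$, and here you offer only a plan (fix $a$, apply a Brun-type upper-bound sieve, sum over the small parts) with no argument that the resulting sum beats the threshold; indeed you concede that arranging this is ``precisely the content of the Graham--Holt--Pomerance theorem.'' An argument that terminates exactly where the difficulty begins is an outline, not a proof, so part (b) must be counted as unproven.
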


Part (b) of the preceding was improved by Yamada \cite{Yamada}, although the 
bound there is slightly more complicated than that of Graham--Holt--Pomerance.  In Lemma
\ref{Lemma:GHP}, one considers numbers with the same prime factors.  Because of this,
we will also need the following lemma of Thue.

\begin{lemma}[Thue \cite{Thue}]\label{Lemma:Thue}
Let $1 = n_1 < n_2 < \cdots$ be the sequence of positive integers whose prime factors
are at most $p$.  Then $\lim_{i\to\infty} (n_{i+1} - n_i) = \infty$.
\end{lemma}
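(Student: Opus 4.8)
The plan is to argue by contradiction and to reduce the statement to a finiteness assertion for a finite family of Thue equations. Let $q_1 < q_2 < \cdots < q_r$ be the primes not exceeding $p$, so that the $n_i$ are exactly the positive integers of the form $q_1^{e_1}\cdots q_r^{e_r}$; call such an integer $p$-smooth. If the conclusion fails, then there is a fixed $M$ with $n_{i+1} - n_i \leq M$ for infinitely many $i$. Since each such difference is a positive integer in $\{1,\ldots,M\}$, the pigeonhole principle yields a single $k \leq M$ with $n_{i+1} - n_i = k$ for infinitely many $i$. In particular there are infinitely many $p$-smooth $n$ for which $n+k$ is also $p$-smooth. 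It therefore suffices to prove that for each fixed positive integer $k$ only finitely many $p$-smooth $n$ have $n+k$ again $p$-smooth.

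Second, I would reduce each such pair to a binary cubic equation by peeling off cubes. Given a $p$-smooth integer $m = q_1^{a_1}\cdots q_r^{a_r}$, write $a_j = 3a_j' + b_j$ with $b_j \in \{0,1,2\}$, so that $m = c\,u^3$ with $c = \prod_j q_j^{b_j}$ cubefree and $p$-smooth and $u = \prod_j q_j^{a_j'}$. Applying this to $n = c\,u^3$ and $n+k = d\,v^3$ gives
\begin{equation*}
 d v^3 - c u^3 = k ,
\end{equation*}
where $c$ and $d$ range over the finite set of cubefree $p$-smooth integers. Since the map $n \mapsto (c,u)$ is injective, the infinitely many pairs are distributed among finitely many equations of this shape, so for at least one fixed pair $(c,d)$ the displayed equation has infinitely many solutions $(u,v)$ in positive integers.

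Finally, I would derive a contradiction from Thue's theorem on the finiteness of integer solutions of $F(X,Y) = k$ for an irreducible binary form $F$ of degree at least three. The form $F(X,Y) = d Y^3 - c X^3$ is irreducible over $\Q$ precisely when $d/c$ is not the cube of a rational number: a cubic with no rational root is irreducible, and $d - cX^3$ has a rational root exactly when $d/c$ is a rational cube. A short valuation argument, using that $c$ and $d$ are cubefree, shows that this occurs only when $c = d$. When $c \neq d$, Thue's theorem forces only finitely many solutions $(u,v)$, contradicting the previous step. When $c = d$ the equation reads $c(v^3 - u^3) = k$ with $v > u \geq 1$, which forces $v^3 - u^3 \leq k$ and hence bounds $u$ and $v$; again only finitely many solutions exist. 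Either way the assumption of infinitely many solutions is untenable, which completes the reduction.

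The combinatorial reductions in the first two steps are routine, and the entire weight of the argument rests on the Diophantine input, namely Thue's finiteness theorem for binary forms of degree at least three, which is exactly the content attributed to \cite{Thue}; this is the step I expect to be the genuine obstacle, in that no comparably elementary substitute is available. The only points requiring care are the irreducibility of $dY^3 - cX^3$ and the separate, elementary treatment of the degenerate case $c = d$. It is essential that one passes to cubes rather than squares, since this is what guarantees that the resulting forms have degree at least three and are therefore governed by Thue's theorem.
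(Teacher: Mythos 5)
Your proposal is correct, but note the point of comparison: the paper does not prove this lemma at all --- it states it as a known result and cites Thue, using it later only through the consequence that for fixed $k$ only finitely many pairs $n, n+k$ can both be $p$-smooth. What you have written is the classical deduction of that statement from Thue's finiteness theorem for cubic Thue equations, and all of its steps check out: the pigeonhole reduction from bounded gaps to a single gap $k$ occurring infinitely often is valid; the decomposition $m = c u^3$ with $c$ cubefree and $p$-smooth is unique, and there are only $3^{\pi(p)}$ possible values of $c$, so a second pigeonhole correctly concentrates infinitely many solutions on one equation $d v^3 - c u^3 = k$; the valuation argument showing $d Y^3 - c X^3$ is irreducible over $\Q$ unless $c = d$ (because $c,d$ cubefree and $v_q(c) \equiv v_q(d) \pmod 3$ forces $v_q(c) = v_q(d)$) is right; and the degenerate case $c = d$ is handled by the elementary bound $v^3 - u^3 \geq 3u^2$ with $v > u$, so $u$ and $v$ are bounded. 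The argument is not circular: the lemma as stated in the paper concerns gaps between smooth numbers, while the input you invoke is Thue's theorem on $F(X,Y)=k$ for irreducible forms of degree at least $3$, which is exactly the content of the cited reference and is strictly the deeper fact. Your closing remark is also the right one --- passing to cubes rather than squares is essential, since $dY^2 - cX^2 = k$ (a Pell-type equation) can have infinitely many solutions, so no degree-two analogue of this reduction could work.
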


A more explicit version of Thue's theorem is due to Tijdeman \cite{Tijdeman},
who proved that there is an effectively computable constant $C = C(p)$ such that
$n_{i+1} - n_i > n_i / (\log n_i)^C$ for $n_i \geq 3$.  For our purposes, however,
Thue's result is sufficient.  In particular, Lemma \ref{Lemma:Thue} implies that
for each fixed $k$, the sequence $n_1,n_2,\ldots$ contains only finitely many pairs $n_i,n_j$
for which $n_j = n_i + k$.

We are now ready to show that $T(p)$ and $S(p)$ are rarely equal to zero relative to the
counting function $\pi_k$.

\begin{lemma}\label{Lemma:Equality}
As $x \to \infty$,
\begin{enumerate}
\item $\#\{ p \in \Pi_k(x) : S(p) = 0 \} = o(\pi_k(x))$, and
\item $\#\{ p \in \Pi_k(x) : T(p) = 0 \} = o(\pi_k(x))$.
\end{enumerate}
\end{lemma}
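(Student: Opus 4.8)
The plan is to set $n = p-1$, so that the two quantities under study become
\[
S(p) = \frac{\phi(n)}{n} - \frac{\phi(n+k)}{n+k}, \qquad T(p) = \phi(n) - \phi(n+k),
\]
and then to treat the two parts by rather different mechanisms: part (a) reduces to a clean arithmetic rigidity statement, while part (b) is precisely where Lemma \ref{Lemma:GHP} and Lemma \ref{Lemma:Thue} are meant to be used.

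For part (a), I would first note via \eqref{eq:Phin} that $S(p)=0$ if and only if $\prod_{q \mid n}(1 - 1/q) = \prod_{q \mid n+k}(1-1/q)$, a condition depending only on the sets of prime factors of $n$ and $n+k$. The key sub-claim is that the value $\prod_{q \in A}(1-1/q)$ determines the finite set of primes $A$ uniquely: after cancelling common primes one may assume $A,B$ disjoint, and, clearing denominators to get $\prod_{q\in A}(q-1)\prod_{q\in B}q = \prod_{q\in B}(q-1)\prod_{q\in A}q$, one compares the $q^*$-adic valuations of the two sides, where $q^* = \max(A \cup B)$. The prime $q^*$ divides one side but not the other, a contradiction; hence $A=B$. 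Consequently $S(p) = 0$ forces $\operatorname{rad}(n) = \operatorname{rad}(n+k)$, and since any prime dividing $n$ then also divides $n+k$ and hence divides $k$, both $n$ and $n+k$ are $P$-smooth, where $P$ is the largest prime factor of $k$. By Lemma \ref{Lemma:Thue} only finitely many such pairs $n, n+k$ occur, so the count in (a) is $O(1) = o(\pi_k(x))$.

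For part (b), the condition $T(p) = 0$ is $\phi(n) = \phi(n+k)$, and I would split the solutions according to Lemma \ref{Lemma:GHP}. The exceptional solutions number at most $x/\exp((\log x)^{1/3})$, which is $o(x/(\log x)^2) = o(\pi_k(x))$. For the standard solutions $n = j\left((j+k)t/g + 1\right)$ one has $\operatorname{rad}(j) = \operatorname{rad}(j+k)$, so by the same smoothness observation and Lemma \ref{Lemma:Thue} only finitely many $j$ occur. Fixing such a $j$ and writing $g = \gcd(j,k)$, $a = j/g$, $b = (j+k)/g$, a short computation gives $n = j(bt+1)$ and $n+k = (j+k)(at+1)$. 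Thus requiring $p = n+1$ and $p+k = n+k+1$ to be prime, on top of the primality of $bt+1$ and $at+1$ built into the construction, forces at least three distinct linear forms in $t$ to be simultaneously prime. The Brun sieve bound of Section \ref{Section:BH} then yields $O(x/(\log x)^3)$ admissible $t \leq x$ for each $j$, and summing over the finitely many $j$ gives $o(\pi_k(x))$.

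The main obstacle I anticipate is bookkeeping rather than conceptual. In (a) the valuation argument is short but must be stated with care. In (b) the essential gains are recognizing that the same-prime-factor hypothesis of Lemma \ref{Lemma:GHP} forces $j$ to be $k$-smooth, so that Lemma \ref{Lemma:Thue} leaves only finitely many $j$, and that the extra constraints that $p$ and $p+k$ be prime supply the crucial third linear form needed to push the Brun estimate below $(\log x)^{-2}$. The delicate points will be verifying distinctness and admissibility of the linear forms, tracking the range of $t$ implied by $n \leq x$, and confirming that both the exceptional-solution bound and the sieve bound are genuinely $o(\pi_k(x))$ against the Bateman--Horn asymptotic $\pi_k(x) \sim C_k x/(\log x)^2$.
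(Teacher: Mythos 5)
Your proposal is correct and follows essentially the same route as the paper: part (a) reduces $S(p)=0$ to smoothness of $p-1$ and $p+k-1$ and then invokes Thue's result (Lemma \ref{Lemma:Thue}), while part (b) combines the Graham--Holt--Pomerance classification (Lemma \ref{Lemma:GHP}), Thue again to leave only finitely many $j$, and a Brun-sieve bound on the simultaneous primality of the resulting distinct linear forms in $t$, exactly as in the paper. The only cosmetic difference is in (a), where you prove the stronger fact that $\prod_{q \mid n}(1-1/q)$ determines $\operatorname{rad}(n)$ via a valuation argument at the largest prime involved, whereas the paper merely observes that the largest prime factor of the denominator of $\phi(n)/n$ in lowest terms is $P(n)$, so $S(p)=0$ forces $P(p-1)=P(p+k-1)\mid k$; both arguments land in the same smoothness-plus-Thue conclusion.
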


\begin{proof}
\noindent(a)
Let $P(n)$ denote the largest prime factor of $n$. Since 
\begin{equation}\label{eq:qqpn}
\frac{\phi(n)}{n} = \prod_{q|n} \bigg(\frac{q-1}{q}\bigg),
\end{equation}
it follows that $P(n)$ is the largest prime factor of the denominator of $\phi(n)/n$.
If $S(p) = 0$, then $P(p-1) = P(p+k-1)$ divides
$\gcd(p-1,p+k-1)$, which divides $k$.  Consequently, $S(p) = 0$ implies that
the prime factors of both $p-1$ and $p+k-1$ are at most $k$.
Lemma \ref{Lemma:Thue} implies that only finitely many such $p$ exist.
Thus, the number of primes $p \in \Pi_k(x)$ for which $S(p) =0$ is $o(\pi_k(x))$.
\medskip

\noindent(b)
Lemma \ref{Lemma:Thue} ensures that for each fixed $k$, there 
are only finitely many $j$ for which $j$ and $j+k$ have the same prime factors.
Fix $j$ and let $g = \gcd(j,j+k)$.  To apply Lemma \ref{Lemma:GHP} with $n = p-1$,
we must count those
\begin{equation*}
\qquad\qquad t \leq \frac{g(x-j+1)}{j(j+k)} \qquad \text{(so that $p \leq x$)}
\end{equation*}
for which
\begin{align*}
p &= j\bigg( \frac{j+k}{g}t+1\bigg)+1,
&q &= j\bigg( \frac{j+k}{g}t+1\bigg)+k+1,\\
r&= \frac{j}{g}t = 1,
&s &= \frac{j+k}{g}t + 1,
\end{align*}
are simultaneously prime.  Since we have four linear constraints, 
the Brun sieve ensures that the number of such $t$ is
$O(x / (\log x)^4) = o(\pi_k(x))$.
Thus, the number of primes $p \in \Pi_k(x)$ for which $T(p) =0$ is $o(\pi_k(x))$.
\end{proof}

Our proof of Lemma \ref{Lemma:Equality}a actually shows something stronger:
 $S(p) = 0$ for only finitely many $p \in \Pi_k$.
We can prove Lemma \ref{Lemma:Equality}a as stated without Thue's result (Lemma \ref{Lemma:Thue})
as follows.
If $p-1 \leq x$ and $P(p-1) \leq k$, then
$p-1$ is divisible only by the $\pi(k)$ primes at most $k$.  The number of such 
$p$ at most $x$ is\footnote{If $\pi(k) = s$ and $P(p-1) \leq k$, we may write
$p-1 = p_1^{a_1} p_2^{a_2} \cdots p_s^{a_s}$, in which
$2 = p_1 < p_2 < \cdots < p_s$ are the primes at most $k$.
For $i=1,2,\ldots,s$, we have 
$p_i^{a_i} \leq x$ and hence $a_i \leq (\log x)/ \log p_i$.
Thus, there are at most $1+\log x/\log p_i$ possibilities for $a_i$.
Consequently, there are at most
$O((\log x+1)^s)=O((\log x)^{\pi(k)})$ admissible
 vectors of exponents $(a_1,a_2,\ldots,a_s)$.}
 $O((\log x)^{\pi(k)})=o(x/(\log x)^2)$,
even without the condition that $p$ is prime.

The next step toward the desired totient comparison theorem
(Theorem \ref{Theorem:Comparison}) is to prove that for each $\ell \geq 1$,
most $p \in \Pi_k(x)$ have the property that $2^{\ell} | T(p)$;
this is Lemma \ref{Lemma:2L}.   Since $T(p) = 0$ rarely occurs
by Lemma \ref{Lemma:Equality}b, it will follow that 
$T(p)$ is typically large in absolute value.  To do this,
we require the following folk lemma.  Since we are
unable to locate an exact reference for it, we provide the proof.

\begin{lemma}\label{Lemma:MertensPower}
$\displaystyle \sum_{q^a \leq x} \frac{1}{q^a} = \log \log x + O(1)$.
\end{lemma}

\begin{proof}
Mertens' theorem \cite[\S VII.28.1b]{Sandor} implies that
\begin{equation*}
\sum_{q \leq x} \frac{1}{q} = \log \log x + O(1).
\end{equation*}
Thus,
\begin{align*}
\sum_{q^a \leq x} \frac{1}{q^a}
&=  \sum_{q \leq x} \frac{1}{q} + \sum_{\substack{q^a \leq x \\ a \geq 2}} \frac{1}{q^a} 
\leq \log \log x + O(1) + \sum_{n \geq 2} \sum_{k\geq 2} \frac{1}{n^k} \\
&= \log \log x + O(1) + \sum_{n \geq 2} \frac{1}{n^2} \cdot \frac{1}{1-1/n}\\
&= \log \log x + O(1). \qedhere
\end{align*}
\end{proof}

Let $\omega(n)$ denote the number of distinct prime divisors of $n$.
The formula
\begin{equation*}
\phi(n) = \prod_{p^a \| n} p^{a-1}(p-1)
\end{equation*}
ensures that $2^{\omega(n)-1} | \phi(n)$ because each odd prime power $p^a$ that exactly divides $n$
provides at least an additional factor of $2$ to $\phi(n)$ since $p-1$ is even.  If $p$ is large,
then $p-1$ and $p+k-1$ tend to have many prime factors.  Thus, we expect that
$T(p)$ should be divisible by a large power of $2$.  The following makes this precise.

\begin{lemma}\label{Lemma:2L}
For $k \geq 2$ even and $\ell \geq 1$,
\begin{equation*}
\#\big\{ p \in \Pi_k(x) \,:\,2^{\ell}|T(p) \big\} \,\sim\,\pi_k(x).
\end{equation*}
\end{lemma}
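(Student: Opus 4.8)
The plan is to reduce the divisibility assertion to a statement about the number of distinct prime factors of the two shifts $p-1$ and $p+k-1$, and then to establish that statement by a second-moment (Tur\'an-type) argument carried out over $\Pi_k$. Recall from the discussion preceding the lemma that $2^{\omega(n)-1}\mid\phi(n)$ for every $n$, and that, since $k$ is even, both $p-1$ and $p+k-1$ are even. Hence if $\omega(p-1)\ge\ell+1$ and $\omega(p+k-1)\ge\ell+1$, then $2^{\ell}$ divides each of $\phi(p-1)$ and $\phi(p+k-1)$, and therefore $2^{\ell}\mid T(p)$ (this includes the harmless case $T(p)=0$). Consequently it suffices to prove
\[
\#\{p\in\Pi_k(x): \omega(p-1)\le\ell\}=o(\pi_k(x)),
\]
together with the identical statement for $p+k-1$; since the good set is $\Pi_k(x)$ minus the union of the two exceptional sets, a union bound then yields the lemma.

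To control $\omega(p-1)$ I would introduce a truncation parameter $z=z(x)\to\infty$, to be taken growing very slowly, and set $\omega_z(m)=\#\{q\le z: q\mid m\}$. Because $\omega_z(m)\le\omega(m)$, it is enough to show that $\omega_z(p-1)\ge\ell+1$ for all but $o(\pi_k(x))$ of the $p\in\Pi_k(x)$, which I would do by computing the first two moments of $\omega_z(p-1)$ over $\Pi_k(x)$. Writing $\omega_z(p-1)=\sum_{q\le z}\mathbf{1}[q\mid p-1]$ and $\omega_z(p-1)^2=\omega_z(p-1)+\sum_{q_1\ne q_2\le z}\mathbf{1}[q_1q_2\mid p-1]$, both moments reduce to counts of the shape $\#\{p\in\Pi_k(x): d\mid p-1\}$ for squarefree $d$ composed of primes $\le z$. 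For such $d$ coprime to $k(k+1)$, requiring $p\equiv1\pmod d$ selects exactly one of the admissible residues modulo each $q\mid d$, so the Bateman--Horn conjecture gives
\[
\#\{p\in\Pi_k(x): d\mid p-1\}\ \sim\ \pi_k(x)\prod_{q\mid d}\frac{1}{q-2}.
\]
Since $\sum_{q\le z}\tfrac{1}{q-2}=\log\log z+O(1)$ by Mertens' theorem, with Lemma~\ref{Lemma:MertensPower} absorbing the harmless contributions of the higher prime powers and of the boundary terms, this yields mean $\mu:=\log\log z+O(1)$ and second moment $(\log\log z)^2+O(\log\log z)$, hence variance $O(\log\log z)$.

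Chebyshev's inequality then closes the argument. For $x$ large we have $\mu>2(\ell+1)$, so $\{\omega_z(p-1)\le\ell\}\subseteq\{|\omega_z(p-1)-\mu|>\mu/2\}$, and the number of such $p$ is $\ll\pi_k(x)(\log\log z)/(\mu/2)^2=O(\pi_k(x)/\log\log z)=o(\pi_k(x))$. Running the same computation with $p+k-1$ in place of $p-1$ (now imposing $p\equiv1-k\pmod d$) gives the companion estimate, and combining the two via the reduction of the first paragraph proves the lemma.

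The hard part is making the moment estimates uniform as $d$ ranges over the squarefree moduli built from primes $\le z$. The lower bound on the first moment is where the parity barrier bites: an upper-bound sieve such as Brun cannot by itself detect the prime pairs $p,p+k$, so the lower bound must be supplied by the assumed Bateman--Horn asymptotics, while Brun delivers the matching upper bounds on the second moment and on the exceptional counts. The saving grace is that $\ell$ is fixed, so $z$ may be allowed to tend to infinity arbitrarily slowly; then only $O(z^2)$ moduli $d$ enter the two moments, and choosing $z$ to grow slowly enough renders the total Bateman--Horn error term $o(\pi_k(x))$. Verifying this uniformity, together with checking that the finitely many primes dividing $k(k+1)$ and the prime-power corrections perturb the moments by only $O(\log\log z)$, is the technical heart of the proof.
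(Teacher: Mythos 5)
Your proof is correct (modulo the uniformity details you flag, which can indeed be handled by letting $z$ grow slowly via a diagonal argument), and the reduction step --- $\omega(p-1),\omega(p+k-1)\geq \ell+1$ forces $2^{\ell}\mid T(p)$, so it suffices to show $\#\{p\in\Pi_k(x):\omega(p-1)\leq\ell\}=o(\pi_k(x))$ and likewise for $p+k-1$ --- is exactly the paper's. But your treatment of the exceptional count is genuinely different. You run a Tur\'an-type second-moment argument: truncate to $\omega_z$, compute the first two moments over $\Pi_k(x)$ using Bateman--Horn asymptotics for the counts $\#\{p\in\Pi_k(x):d\mid p-1\}$ with $d$ a product of at most two primes $q\leq z$ (these are precisely the counts in Lemma \ref{lemma:qdivisors} with $|Q|\in\{1,2\}$), and apply Chebyshev. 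The paper instead writes $p-1=nr$ with $r$ prime and $\omega(n)\leq\ell$, applies the Brun sieve to the three linear forms $t$, $nt+1$, $nt+k+1$ --- an unconditional upper bound, uniform in $n$ with an explicit $2^{2\ell}$ loss --- and then sums $1/n$ over $n$ with $\omega(n)\leq\ell$ via Lemma \ref{Lemma:MertensPower}. The trade-off: the paper's route is quantitatively much stronger, bounding the exceptional set by $O\left(x(\log\log x)^{\ell}/(\log x)^3\right)$, a full power of $\log x$ below $\pi_k(x)$, with conjectural input confined to the lower bound $\pi_k(x)\gg x/(\log x)^2$; your route saves only a factor $1/\log\log z$ with $z$ growing arbitrarily slowly, which still suffices, but it places the Bateman--Horn asymptotics, and the attendant uniformity-in-$d$ issue that you rightly call the technical heart, at the center of the estimate rather than at its periphery.

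One caveat on your closing commentary: the claim that ``Brun delivers the matching upper bounds on the second moment'' is not right. The Brun sieve, as stated in Section \ref{Section:BH}, carries an unspecified constant $B>1$; if the off-diagonal second-moment terms were known only up to such a factor, the variance could be as large as a constant times $(\log\log z)^2$, and Chebyshev would yield nothing. The cancellation of the $(\log\log z)^2$ terms between the second moment and $\mu^2$ genuinely requires asymptotics with constant $1$ for the two-prime moduli, i.e.\ Bateman--Horn, which is what your displayed count actually invokes. So the proof stands as written in your main paragraphs, but that remark misstates which input does the work.
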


\begin{proof}
It suffices to show that the counting function for the set of $p \in \Pi_k(x)$
for which $\omega(p-1) \leq \ell$ or $\omega(p+k-1) \leq \ell$ is $o(\pi_k(x))$.
Indeed, if $\omega(p-1),\omega(p+k-1) \geq \ell+1$, then the preceding discussion
implies that $2^{\ell}$ divides both $\phi(p-1)$ and $\phi(p+k-1)$, and hence divides
$T(p)$.

If $\omega(p-1) \leq \ell$, then
$p-1 = nr$, in which $r$ is prime and $\omega(n) \leq \ell$.
We must have $\gcd(n,k+1) =1$ since otherwise $p+k$ would be composite.
Let $g$ be the product of the three polynomials
\begin{equation*}
g_1(t) = t,\qquad
g_2(t) = nt+1,\qquad
g_3(t) = nt+k+1.
\end{equation*}
Then
\begin{equation*}
N_g(q) = \begin{cases}
	1 & \text{if $q | n$}, \\
	2 & \text{if $q \nmid n$ and $q|k$ or $q|(k+1)$},\\
	3 & \text{if $q \nmid n$, $q\nmid k$, $q\nmid (k+1)$}.
\end{cases}
\end{equation*}
The Brun sieve provides the following asymptotic estimate, uniformly in $n$:
\begin{align}
\sum_{\substack{ t \leq \frac{x}{n} \\ t, nt+1,\\ nt+k+1\, \text{prime}}} \!\!\!\!\!\!\!\!1
&\ll \frac{x/n}{(\log x/n)^3} \prod_q \frac{1 - N_g(q)/q}{(1 - 1/q)^3} \nonumber \\
&\ll \frac{x}{n(\log x)^3} \prod_{q|n} \frac{1 - 1/q}{(1 - 1/q)^3}
\prod_{\substack{q\nmid n \, \text{and}\\ \text{$q|k$ or $q|(k+1)$}}}  \frac{1 - 2/q}{(1 - 1/q)^3}
\prod_{\substack{q\nmid n , q\nmid k \\\text{and}\, q\nmid (k+1)}}  \frac{1 - 3/q}{(1 - 1/q)^3}  \nonumber \\
&\ll \frac{x}{n(\log x)^3} \bigg[\frac{1}{(1 - \frac{1}{2})^2}\bigg]^{\omega(n)}  \!\!\!\!
\prod_{\substack{q|k\,\text{or}\\q|(k+1)}}  \frac{1 - 2/q}{(1 - 1/q)^3}  \!\!\!\!
\prod_{\substack{q\nmid n , q\nmid k \\\text{and}\, q\nmid (k+1)}} \!\!\!\!\!\! \frac{1 - 3/q}{(1 - 1/q)^3}  \label{eq:TripleProduct}\\
&\ll \frac{2^{2\ell} x}{n(\log x)^3} . \nonumber
\end{align}
In the preceding computation, we used the fact that
\begin{equation*}
1 \leq \frac{1 - 2/q}{(1-1/q)^3}  \quad \text{for $q \geq 3$}
\end{equation*}
to overestimate the finite product in the middle of \eqref{eq:TripleProduct} independently of $n$.
Moreover, the third product in \eqref{eq:TripleProduct} converges since
\begin{equation*}
1 - \frac{1-3/q}{(1-1/q)^3} \,=\, \frac{3q-1}{(q-1)^3} \,\sim\, \frac{3}{q^2}.
\end{equation*}
Lemma \ref{Lemma:MertensPower} provides
\begin{align*}
\sum_{\substack{p \in \Pi_k(x)\\ \omega(p-1) \leq \ell}} 1
&= \sum_{\substack{n \leq x \\ \omega(n) \leq \ell}}  \sum_{\substack{ t \leq \frac{x}{n} \\ t, nt+1,\\ nt+k+1 \text{ prime}}} 1\\
&\ll  \sum_{\substack{n \leq x \\ \omega(n) \leq \ell}}  \frac{2^{2\ell} x}{n(\log x)^3} \\
&\ll  \frac{x}{(\log x)^3} \sum_{\substack{n \leq x \\ \omega(n) \leq \ell}}  \frac{1}{n} \\
&\ll\frac{x}{(\log x)^3} \frac{1}{\ell!}\bigg(1+\sum_{q^a \leq x} \frac{1}{q^a}\bigg)^{\ell} \\
&\ll\frac{x}{(\log x)^3} \big( \log \log x + O(1) \big)^{\ell} \\
&=o(\pi_k(x)).
\end{align*}
Similarly, the count of $p \in \Pi_k(x)$ with $\omega(p+k-1) \leq \ell$ is also $o(\pi_k(x))$.
\end{proof}

We are now in a position to prove the main result of this section.
It says that $S(p)$ and $T(p)$ are nonzero and 
share the same sign for most $p \in \Pi_k(x)$.

\begin{theorem}[Totient Comparison Theorem]\label{Theorem:Comparison}
Let $k$ be even.  Then as $x \to \infty$,
\begin{equation*}
\# \{ p \in \Pi_k(x) : S(p)T(p) > 0 \} \,\sim\, \pi_k(x).
\end{equation*}
\end{theorem}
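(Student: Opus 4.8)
The plan is to reduce the statement to the two negligibility results already in hand, Lemma~\ref{Lemma:Equality}b and Lemma~\ref{Lemma:2L}, by exploiting an exact algebraic identity between $S(p)$ and $T(p)$. Write $m = p-1$ and $n = p+k-1 = m+k$, and abbreviate $a = \phi(m)/m$ and $b = \phi(n)/n$, so that $S(p) = a-b$ and $T(p) = ma - nb$. Substituting $n = m+k$ gives
\begin{equation*}
T(p) \,=\, m(a-b) - kb \,=\, (p-1)\,S(p) \,-\, k\,\frac{\phi(p+k-1)}{p+k-1}.
\end{equation*}
Since every $n = p+k-1 \geq 2$ has $0 < b < 1$, the correction term satisfies $0 < kb < k$. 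Solving for $S(p)$ yields $S(p) = \big(T(p) + kb\big)/(p-1)$, so that, $p-1$ being positive, the sign of $S(p)$ equals the sign of $T(p)+kb$.

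First I would run the elementary sign analysis this identity affords. If $T(p) > 0$, then $T(p)+kb>0$, so $S(p)>0$ and the signs agree. If $T(p) < -kb$, then $T(p)+kb<0$, so $S(p)<0$ and again the signs agree. Hence the only way to have $S(p)T(p) \leq 0$ while $T(p)\neq 0$ is for $T(p)$ to fall in the narrow window $-kb \leq T(p) < 0$; because $kb < k$, this forces $T(p)$ to be a nonzero integer with $|T(p)| < k$.

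The key point is that this bad window is excluded by a $2$-adic valuation bound. Fix once and for all an integer $\ell$ with $2^{\ell} > k$. If $2^{\ell} \mid T(p)$ and $T(p)\neq 0$, then $|T(p)| \geq 2^{\ell} > k$, so $T(p)$ cannot lie in $(-k,0)$ and in particular not in $[-kb,0)$. Thus, for every $p \in \Pi_k(x)$ with $2^{\ell}\mid T(p)$ and $T(p)\neq 0$, the analysis above delivers $S(p)T(p) > 0$. Consequently
\begin{equation*}
\{ p \in \Pi_k(x) : S(p)T(p) \leq 0 \} \,\subseteq\, \{ p \in \Pi_k(x) : 2^{\ell} \nmid T(p) \} \,\cup\, \{ p \in \Pi_k(x) : T(p) = 0 \}.
\end{equation*}
Lemma~\ref{Lemma:2L}, applied with this fixed $\ell$, shows the first set on the right is $o(\pi_k(x))$, and Lemma~\ref{Lemma:Equality}b shows the second is as well. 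Therefore $\#\{ p \in \Pi_k(x) : S(p)T(p) \leq 0\} = o(\pi_k(x))$, and since the complementary count is trivially at most $\pi_k(x)$, we obtain $\#\{ p \in \Pi_k(x) : S(p)T(p) > 0 \} \sim \pi_k(x)$.

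I expect the genuine difficulty of the theorem to lie entirely in its inputs rather than in this final assembly: the Brun-sieve estimate underlying Lemma~\ref{Lemma:2L} and the Graham--Holt--Pomerance count underlying Lemma~\ref{Lemma:Equality}b are where the real work sits. Within the present argument the only delicate point is pinning down the sign-disagreement window $-kb \leq T(p) < 0$ precisely and checking that the choice $2^{\ell} > k$ is exactly strong enough to eliminate it once $T(p)\neq 0$; a careless bound on $kb$ that ignored $b < 1$ would leave a spurious gap between the window and the threshold $2^{\ell}$.
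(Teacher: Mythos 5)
Your proof is correct and takes essentially the same route as the paper's: both arguments show that a sign disagreement with $T(p)\neq 0$ forces $T(p)$ into a window of width less than $k$ about zero, which is then excluded for almost all $p$ by Lemma \ref{Lemma:2L} (divisibility by a fixed power $2^{\ell}\geq k$), with $T(p)=0$ handled by Lemma \ref{Lemma:Equality}b. Your exact identity $T(p)=(p-1)S(p)-k\,\phi(p+k-1)/(p+k-1)$ is a slightly cleaner way to obtain that window than the paper's chain of inequalities (and it lets you bypass Lemma \ref{Lemma:Equality}a, since $S(p)=0$ also lands $T(p)$ in the window), but the substance of the argument is identical.
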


\begin{proof}
In light of Lemma \ref{Lemma:Equality},
it suffices to show that
\begin{equation*}
\#\{ p \in \Pi_k(x) : S(p) > 0 ,\, T(p) > 0 \} \, \sim\, \pi_k(x).
\end{equation*}
Since $T(p) > 0$ implies that $S(p) > 0$, we focus on the 
converse.  If $S(p) > 0$, then
\begin{align}
0 
&< (p+k-1)\phi(p-1) - (p-1)\phi(p+k-1) \nonumber \\
&= p\big( \phi(p-1) - \phi(p+k-1)\big) + (k-1)\phi(p-1) + \phi(p+k-1) \nonumber \\
&\leq p\big( \phi(p-1) - \phi(p+k-1)\big) + (k-1)(p-1) + (p+k-1) \nonumber \\
&\leq p\big( \phi(p-1) - \phi(p+k-1) + k\big) \nonumber\\
&= p\big( T(p) + k\big) . \label{eq:Trouble}
\end{align}
Fix $\ell$ so that $2^{\ell} \geq k$.
Apply Lemma \ref{Lemma:2L} at \eqref{eq:Trouble} and conclude that
\begin{equation*}
\{ p \in \Pi_k(x) : S(p) > 0,\, T(p) \geq 0 \} \,\sim\, \pi_k(x).
\end{equation*}
Now apply Lemma \ref{Lemma:Equality}b to replace $T(p) \geq 0$ in the preceding
with $T(p) > 0$.
\end{proof}

In light of Theorem \ref{Theorem:Comparison}, we can focus our attention on 
the expression $S(p)$,
which is nonzero and shares the same sign as $T(p)$
for all $p \in \Pi_k$ outside of a set of zero density with respect to the counting function
$\pi_k(x)$.  The two expressions
\begin{equation}\label{eq:phicompare}
	\frac{\phi(p-1)}{p-1} = \prod_{q |  (p-1)} \left( 1 - \frac{1}{q} \right) \quad \text{and} \quad  \frac{\phi(p+k-1)}{p+k-1} = \prod_{q |  (p+k-1)} \left( 1 - \frac{1}{q} \right)
\end{equation}
that comprise $S(p)$ are primarily determined by the small prime divisors of $p-1$ and $p+k-1$. 
Since $p$ and $p+k$ are both prime, the nature of these small divisors is also related to $k$.

\section{An heuristic argument}\label{Section:Heuristic}

Before proceeding to the technical details, it is instructive to go through a brief heuristic argument.
With the help of the Bateman--Horn conjecture, we will ultimately be able to turn this
informal reasoning into rigorous, quantitative proofs.

As Table \ref{Table:PrimePairList} suggests, the behavior of $T(p)$ is heavily influenced by 
the value of $k \pmod{3}$.  Here is the explanation.
\begin{itemize}
\item If $k \equiv -1 \pmod{3}$, then elementary considerations imply that
$3 | (p+k-1)$ whenever $p,p+k$ are prime and $p \geq 5$.
Then \eqref{eq:phicompare} becomes
\begin{equation*}
\frac{\phi(p-1)}{p-1} 
= \frac{1}{2} \prod_{\substack{q \geq 5 \\ q |  (p-1) \\  q \nmid (k+1)}} \left( 1 - \frac{1}{q} \right) 
\quad\text{and}\quad  
\frac{\phi(p+k-1)}{p+k-1} 
= \frac{1}{3} \prod_{\substack{q \geq 5 \\ q |  (p+k-1) \\ q \nmid (k-1)}} \left( 1 - \frac{1}{q} \right),
\end{equation*}
and hence we expect that $S(p) > 0$ for most $p \in \Pi_k$.  Moreover, this suggests that
$S(p) < 0$ might occur if $p-1$ is divisible by many small primes.

\item If $k \equiv 1 \pmod{3}$, then a similar argument tells us that
\begin{equation*}
\frac{\phi(p-1)}{p-1} 
= \frac{1}{3} \prod_{\substack{q \geq 5 \\ q |  (p-1) \\ q \nmid (k+1)}} \left( 1 - \frac{1}{q} \right) 
\quad\text{and}\quad  \frac{\phi(p+k-1)}{p+k-1} = \frac{1}{2} \prod_{\substack{q \geq 5 \\ q |  (p+k-1) \\ q \nmid (k-1)}} \left( 1 - \frac{1}{q} \right).
\end{equation*}
Thus, we expect that $S(p) < 0$ for most $p \in \Pi_k$ and that $S(p) >0$ might occur if
$p+k-1$ is divisible by many small primes.

\item If $k \equiv 0 \pmod{q}$, in which $q$ is prime,
then $q$ either divides both $p-1$ and $p+k-1$, or it divides neither. 
Thus, the prime divisors of $k$ have no bearing upon the large-scale sign behavior of $S(p)$.
It is the small primes $q \geq 5$ that divide exactly one of $p-1$ and $p+k-1$ which govern
our problem.  Consequently, the observed bias in the sign of $S(p)$ is less pronounced
if $3|k$.  
\end{itemize}

\section{Primitive roots biases for $k \not\equiv 0 \pmod{3}$}\label{Section:k1}

Let $\chi_3$ denote the nontrivial Dirichlet character modulo $3$.  That is,
\begin{equation*}
	\chi_3(k) = \begin{cases}
		0 & \text{if $k \equiv 0 \pmod{3}$},\\
		1 & \text{if $k \equiv 1 \pmod{3}$},\\
		-1 & \text{if $k \equiv 2 \pmod{3}$}.
	\end{cases}
\end{equation*}
Fix $k \not \equiv 0 \pmod{3}$ and let
\begin{equation*}
5 \leq q_1 < q_2 < q_3 < \cdots
\end{equation*}
be the ordered sequence of primes that do not divide 
\begin{equation}\label{eq:zk}
k(k - \chi_3(k)),
\end{equation}
which is a multiple of $6$.
This sequence is infinite since it contains all primes larger than 
$\max\{5,k+1\}$.  
Let $Q = Q(k)$ denote the set
\begin{equation}\label{eq:Q}
	Q = \{q_1, q_2, \dots, q_m\},
\end{equation}
in which the index $m$ shall be determined momentarily. 
Define
\begin{equation}\label{eq:Lk}
	L_k = \log\left[ \frac{2}{3} \prod_{q \in Q} \left( 1 + \frac{1}{q-1} \right) \right]
\end{equation}
and
\begin{equation}\label{eq:Rk}
	R_k = \sum_{\substack{r \geq 5 \\ r \not\in Q \\ r \nmid (k+\chi_3(k))}} \frac{1}{r-N_f(r)} \log\left( 1 + \frac{1}{r-1} \right),
\end{equation}
in which $f(t) = t(t+k)$ is the polynomial defined in \eqref{eq:f}. 
From \eqref{eq:BHNfp}, we see that $$r - N_f(r) \in \{ r-1,r-2\}$$ for all primes $r$, so 
the general term in \eqref{eq:Rk} is $O(1/r^2)$.
Define $m$ in \eqref{eq:Q} to be the smallest index such that
\begin{equation}\label{eq:thmcond}
	L_k > R_k.
\end{equation}
This is possible since the product \eqref{eq:Lk} diverges if taken over all sufficiently large primes,
while the sum \eqref{eq:Rk} converges under the same circumstances.

This establishes the notation necessary for part (a) of the following result. For part (b), we use an
expression similar to \eqref{eq:Rk}.  Let
\begin{equation*}
	R_k' = \sum_{\substack{r \geq 5 \\ r \nmid (k - \chi_3(k))}} \frac{1}{r-N_f(r)} \log\left( 1 + \frac{1}{r-1} \right).
\end{equation*}
This lays the foundation for the following theorem, which establishes a bias 
in the number of primitive roots of prime pairs $p,p+k$ when $k \equiv \pm 1 \pmod{3}$.

\begin{theorem}\label{thm:kpm1mod3}
Assume that the Bateman--Horn conjecture holds.  Let $k \not\equiv 0 \pmod{3}$.
\begin{enumerate}
\item The set of primes $p \in \Pi_k$ for which
\begin{equation*}
	\sgn T(p) = \chi_3(k)
\end{equation*}
has lower density (as a subset of $\Pi_k$) at least
\begin{equation*}
	\prod_{q \in Q} (q-2)^{-1} \left(1 - \frac{R_k}{L_k} \right) > 0.
\end{equation*}

\item The set of primes $p \in \Pi_k$ for which
\begin{equation*}
	\sgn T(p) = -\chi_3(k)
\end{equation*}
has lower density (as a subset of all prime pairs $p, p+k$) at least
\begin{equation*}
	1 - \frac{R_k'}{\log(3/2)} > 0.6515.
\end{equation*}
\end{enumerate}
\end{theorem}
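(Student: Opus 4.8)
The plan is to prove both parts by passing from $T(p)$ to the more tractable $S(p)$ via the Totient Comparison Theorem (Theorem~\ref{Theorem:Comparison}), and then estimating the densities of suitable divisibility events using the Bateman--Horn conjecture together with a first-moment (Markov) inequality. Throughout I treat the case $\chi_3(k)=-1$, so that $3\mid(p+k-1)$ for $p\geq 5$; the case $\chi_3(k)=1$ is identical after interchanging the roles of $p-1$ and $p+k-1$. As in Section~\ref{Section:Heuristic}, for $p\in\Pi_k$ with $p\geq 5$ one has
\begin{equation*}
\frac{\phi(p-1)}{p-1}=\frac12\prod_{\substack{q\geq 5\\ q\mid(p-1)}}\Bigl(1-\frac1q\Bigr),\qquad
\frac{\phi(p+k-1)}{p+k-1}=\frac13\prod_{\substack{q\geq 5\\ q\mid(p+k-1)}}\Bigl(1-\frac1q\Bigr),
\end{equation*}
since $p-1$ is divisible by $2$ but not $3$, while $p+k-1$ is divisible by both. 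Writing $1-1/q=(1+1/(q-1))^{-1}$ and taking logarithms, I record the equivalence that $S(p)>0$ if and only if
\begin{equation*}
\sum_{\substack{q\geq 5\\ q\mid(p-1)}}\log\Bigl(1+\frac1{q-1}\Bigr)-\sum_{\substack{q\geq 5\\ q\mid(p+k-1)}}\log\Bigl(1+\frac1{q-1}\Bigr)<\log\frac32 .
\end{equation*}

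For part (b) I discard the nonnegative second sum to obtain the sufficient condition $X(p):=\sum_{q\geq5,\,q\mid(p-1)}\log(1+1/(q-1))<\log(3/2)$ for $S(p)>0$. The main step is to show that the density-weighted average of $X$ over $\Pi_k(x)$ converges to $R_k'$. Interchanging the order of summation,
\begin{equation*}
\frac1{\pi_k(x)}\sum_{p\in\Pi_k(x)}X(p)=\sum_{q\geq 5}\log\Bigl(1+\frac1{q-1}\Bigr)\,\frac{\#\{p\in\Pi_k(x):q\mid(p-1)\}}{\pi_k(x)} .
\end{equation*}
For each fixed $q$, the Bateman--Horn conjecture applied to the pair $(qt+1,\,qt+k+1)$ shows the conditional density tends to $1/(q-N_f(q))$ when $q\nmid(k+1)$, and to $0$ when $q\mid(k+1)$ (in which case $p\equiv 1\pmod q$ forces $q\mid p+k$). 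Summing reproduces exactly the series $R_k'$, whose constraint $r\nmid(k-\chi_3(k))$, here $r\nmid(k+1)$, records precisely the primes that can divide $p-1$. Since each term is $O(1/r^2)$, the Brun sieve bounds the tail $\sum_{q>R}$ uniformly in $x$, justifying the interchange. A Markov inequality then gives
\begin{equation*}
\log\tfrac32\cdot\#\{p\in\Pi_k(x):X(p)\geq\log\tfrac32\}\leq\sum_{p\in\Pi_k(x)}X(p)\sim R_k'\,\pi_k(x),
\end{equation*}
so the set where $X(p)<\log(3/2)$, hence $S(p)>0$, has lower density at least $1-R_k'/\log(3/2)$; the Totient Comparison Theorem transfers this to $\sgn T(p)=-\chi_3(k)$. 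The numerical bound $>0.6515$ follows by estimating $R_k'$ uniformly over all $k\not\equiv0\pmod 3$, the extremal case occurring when $k-\chi_3(k)$ has no prime factor $\geq 5$.

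For part (a) I instead force $p-1$ to be divisible by every prime in $Q$, which makes $\phi(p-1)/(p-1)$ small and drives $S(p)<0$. A Bateman--Horn computation shows the primes $p\in\Pi_k$ with $q\mid(p-1)$ for all $q\in Q$ have density $\prod_{q\in Q}1/(q-N_f(q))=\prod_{q\in Q}(q-2)^{-1}$ within $\Pi_k$, since $q\nmid k$ gives $N_f(q)=2$; this is the first factor. On this subfamily the same logarithmic manipulation shows $S(p)<0$ whenever $Y(p):=\sum_{q\geq5,\,q\mid(p+k-1)}\log(1+1/(q-1))<L_k$, where $L_k$ is the logarithm of $\tfrac23\prod_{q\in Q}(1+1/(q-1))$, the ratio of $\tfrac13$ to the forced factor $\tfrac12\prod_{q\in Q}(1-1/q)$, and where the primes of $Q$ cannot divide $p+k-1$. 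Repeating the first-moment argument \emph{conditioned} on $Q\mid(p-1)$, the average of $Y$ converges to $R_k$: for $r\notin Q$ with $r\nmid(k+\chi_3(k))$, here $r\nmid(k-1)$, the events $Q\mid(p-1)$ and $r\mid(p+k-1)$ concern disjoint primes and are asymptotically independent, so the conditional density of $r\mid(p+k-1)$ remains $1/(r-N_f(r))$. Markov then bounds the conditional density of $Y(p)\geq L_k$ by $R_k/L_k$, so on the subfamily the proportion with $S(p)<0$ is at least $1-R_k/L_k>0$ by the defining inequality $L_k>R_k$. Multiplying the two factors and invoking the Totient Comparison Theorem yields the stated lower density for $\sgn T(p)=\chi_3(k)$.

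The main obstacle is the rigorous, \emph{uniform} passage from Bateman--Horn asymptotics for individual primes to the infinite series $R_k$ and $R_k'$: one must interchange a limit in $x$ with an infinite sum over $q$, which requires Brun-sieve tail estimates uniform in $x$ (and, for part (a), uniform over the moduli defining the congruence $Q\mid(p-1)$ and the auxiliary prime $r$). A secondary difficulty is the clean threshold $0.6515$, which demands an explicit, $k$-independent upper bound for $R_k'$ and hence careful control of the slowly varying Mertens-type sum over the admissible primes.
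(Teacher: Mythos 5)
Your proposal follows essentially the same route as the paper's proof: reduce $T(p)$ to $S(p)$ via the Totient Comparison Theorem (Theorem~\ref{Theorem:Comparison}), compute the Bateman--Horn densities of the events $q \mid (p-1+\tau_k)$ for $q \in Q$ and $r \mid (p-1+(k-\tau_k))$ (the paper's Lemmas~\ref{lemma:qdivisors} and~\ref{lemma:rdivisor}), then run a first-moment/Markov argument against the thresholds $L_k$ and $\log(3/2)$, finishing with the same worst-case bound $R_k' < 0.1413$ to obtain $0.6515$. The only point you flag but leave unresolved---uniformity in the interchange of the sum over $r$ with the limit in $x$---is handled in the paper by splitting into three ranges, with Bateman--Horn for $5 \leq r \leq z$, the Brun sieve for $z < r \leq (\log x)^3$, and the trivial arithmetic-progression bound $\pi_{k,r}'(x) \leq x/(ar)+1$ for $(\log x)^3 < r \leq x$, since the Brun sieve alone is not uniform when $r$ is comparable to $x$.
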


Tables \ref{table:k-1mod3} 
and \ref{table:k1mod3} 
provide the sets $Q$, numerical 
values for $L_k$, $R_k$, $R_k'$, and the bounds in Theorem \ref{thm:kpm1mod3} 
for various values of $k$.

\begin{table}\footnotesize
\centering
\begin{tabular}{|c|l|l|l|l|l|l|}
\hline
$k$ & \multicolumn{1}{c|}{$Q$} & \multicolumn{1}{c|}{$L_k$} & \multicolumn{1}{c|}{$R_k$} & \multicolumn{1}{c|}{\begin{tabular}[c]{@{}c@{}}Lower Bound:\\ $T(p) < 0$\end{tabular}} & \multicolumn{1}{c|}{$R_k'$} & \multicolumn{1}{c|}{\begin{tabular}[c]{@{}c@{}}Lower Bound:\\ $T(p) > 0$\end{tabular}} \\ \hline\hline
\multicolumn{1}{|c|}{2} & 5, 7, 11 & 0.067139 & 0.025497 & 0.004594 & 0.141298 & 0.651516 \\ \hline
\multicolumn{1}{|c|}{8} & 5, 7, 11 & 0.067139 & 0.025497 & 0.004594 & 0.141298 & 0.651516 \\ \hline
\multicolumn{1}{|c|}{14} & \begin{tabular}[c]{@{}l@{}}11, 13, 17, 19, \\ 23, 29, 31, 37, \\ 41, 43, 47, 53\end{tabular} & 0.113089 & 0.103683 & $1.56 \times 10^{-18}$ & 0.061779 & 0.847635 \\ \hline
\multicolumn{1}{|c|}{20} & \begin{tabular}[c]{@{}l@{}}11, 13, 17, 19, \\ 23, 29, 31, 37, \\ 41, 43, 47\end{tabular} & 0.094041 & 0.090599 & $3.50 \times 10^{-17}$ & 0.091873 & 0.773414 \\ \hline
\multicolumn{1}{|c|}{26} & 5, 7, 11 & 0.067139 & 0.024890 & 0.004661 & 0.140692 & 0.653012 \\ \hline
32 & 5, 7, 13 & 0.051872 & 0.027680 & 0.002826 & 0.130708 & 0.677634 \\ \hline
38 & 5, 7, 11 & 0.067139 & 0.0245373 & 0.004700 & 0.133845 & 0.669898 \\ \hline
44 & \begin{tabular}[c]{@{}l@{}}7, 13, 17, 19,\\ 23, 29, 31, 37,\\ 41\end{tabular} & 0.107845 & 0.088373 & $5.73 \times 10^{-13}$ & 0.065858 & 0.837574 \\ \hline
50 & \begin{tabular}[c]{@{}l@{}}7, 11, 13, 19,\\ 23, 29, 31\end{tabular} & 0.090439 & 0.066279 & $1.93 \times 10^{-9}$ & 0.118661 & 0.707346 \\ \hline
56 & 5, 11, 13, 17 & 0.053656 & 0.039870 & 0.000057 & 0.132979 & 0.672033 \\ \hline
62 & 5, 11, 13, 17 & 0.053656 & 0.044691 & 0.000037 & 0.11043 & 0.727645 \\ \hline
68 & 5, 7, 11 & 0.067140 & 0.025013 & 0.004647 & 0.138929 & 0.65736 \\ \hline
74 & \begin{tabular}[c]{@{}l@{}}7, 11, 13, 17,\\ 19, 23\end{tabular} & 0.083182 & 0.083047 & $6.14 \times 10^{-10}$ & 0.066895 & 0.835016 \\ \hline
80 & \begin{tabular}[c]{@{}l@{}}7, 11, 13, 17,\\ 19, 23\end{tabular} & 0.083182 & 0.064502 & $8.47 \times 10^{-8}$ & 0.122703 & 0.697378 \\ \hline
86 & 5, 7, 11 & 0.067139 & 0.0214415 & 0.005041 & 0.139985 & 0.654755 \\ \hline
92 & 5, 7, 11 & 0.067139 & 0.018124 & 0.005407 & 0.140071 & 0.654542 \\ \hline
98 & \begin{tabular}[c]{@{}l@{}}5, 13, 17, 19,\\ 23\end{tabular} & 0.056865 & 0.045054 & $1.17 \times 10^{-6}$ & 0.125570 & 0.690307 \\ \hline
104 & \begin{tabular}[c]{@{}l@{}}11, 17, 19, 23,\\ 29, 31, 37, 41,\\ 43, 47, 53, 59,\\ 61, 67, 71, 73,\\ 79\end{tabular} & 0.122425 & 0.114018 & $1.71 \times 10^{-28}$ & 0.035480 & 0.912495 \\ \hline
110 & \begin{tabular}[c]{@{}l@{}}7, 13, 17, 19,\\ 23, 29, 31, 41\end{tabular} & 0.080446 & 0.071049 & $1.29 \times 10^{-11}$ & 0.120861 & 0.701920 \\ \hline
116 & 5, 7, 11 & 0.067139 & 0.023334 & 0.004833 & 0.133975 & 0.669577 \\ \hline
122 & 5, 7, 11 & 0.067139 & 0.025492 & 0.004594 & 0.140660 & 0.653089 \\ \hline
128 & 5, 7, 11 & 0.067139 & 0.025434 & 0.004601 & 0.140724 & 0.652931 \\ \hline
134 & \begin{tabular}[c]{@{}l@{}}7, 11, 13, 17,\\ 19, 23, 29\end{tabular} & 0.118273 & 0.081959 & $4.29 \times 10^{-9}$ & 0.066913 & 0.834971 \\ \hline
140 & \begin{tabular}[c]{@{}l@{}}11, 13, 17, 19,\\ 23, 29, 31, 37,\\ 41, 43, 53\end{tabular} & 0.091583 & 0.085513 & $5.59 \times 10^{-17}$ & 0.117087 & 0.711229 \\ \hline
146 & 5, 11, 13, 17 & 0.053656 & 0.043706 & 0.000041 & 0.110465 & 0.727559 \\ \hline
152 & 5, 7, 11 & 0.067139 & 0.025276 & 0.004618 & 0.137080 & 0.661920 \\ \hline
158 & 5, 7, 11 & 0.067139 & 0.025453 & 0.004599 & 0.140922 & 0.652442 \\ \hline
164 & \begin{tabular}[c]{@{}l@{}}7, 13, 17, 19,\\ 23, 29, 31, 37,\\ 43\end{tabular} & 0.106682 & 0.090011 & $4.72 \times 10^{-13}$ & 0.056311 & 0.861120 \\ \hline
\end{tabular}
\caption{Lower bounds in Theorem \ref{thm:kpm1mod3} for $k \equiv -1 \pmod{3}$.}
\label{table:k-1mod3}
\end{table}

\begin{table}\footnotesize
\centering
\begin{tabular}{|c|l|l|l|l|l|l|}
\hline
$k$ & \multicolumn{1}{c|}{$Q$} & \multicolumn{1}{c|}{$L_k$} & \multicolumn{1}{c|}{$R_k$} & \multicolumn{1}{c|}{\begin{tabular}[c]{@{}c@{}}Lower Bound:\\ $T(p) > 0$\end{tabular}} & \multicolumn{1}{c|}{$R_k'$} & \multicolumn{1}{c|}{\begin{tabular}[c]{@{}c@{}}Lower Bound:\\ $T(p) < 0$\end{tabular}} \\ \hline\hline
\multicolumn{1}{|c|}{4} & 5,7,11 & 0.067139 & 0.025497 & 0.004594 & 0.141298 & 0.651516 \\ \hline
\multicolumn{1}{|c|}{10} & \begin{tabular}[c]{@{}l@{}}7, 11, 13, 17,\\ 19, 23\end{tabular} & 0.083182 & 0.064667 & $8.39 \times 10^{-8}$ & 0.122703 & 0.697378 \\ \hline
\multicolumn{1}{|c|}{16} & \begin{tabular}[c]{@{}l@{}}7, 11, 13, 17, \\ 19, 23, 29\end{tabular} & 0.118273 & 0.081963 & $4.28 \times 10^{-9}$ & 0.066917 & 0.834963 \\ \hline
\multicolumn{1}{|c|}{22} & \begin{tabular}[c]{@{}l@{}}5, 13, 17, 19,\\ 23\end{tabular} & 0.056865 & 0.049243 & $7.58 \times 10^{-7}$ & 0.109409 & 0.730164 \\ \hline
\multicolumn{1}{|c|}{28} & 5, 11, 13, 17 & 0.053656 & 0.038571 & 0.000063 & 0.13616 & 0.664189 \\ \hline
34 & 5, 7, 13 & 0.051872 & 0.028558 & 0.002723 & 0.130455 & 0.678257 \\ \hline
40 & \begin{tabular}[c]{@{}l@{}}7, 11, 17, 19,\\ 23, 29, 31\end{tabular} & 0.071021 & 0.068880 & $1.59 \times 10^{-10}$ & 0.115426 & 0.715324 \\ \hline
46 & \begin{tabular}[c]{@{}l@{}}7, 11, 13, 17,\\ 19, 29, 31\end{tabular} & 0.106611 & 0.082375 & $2.30 \times 10^{-9}$ & 0.066821 & 0.8352 \\ \hline
52 & 5, 7, 11 & 0.067139 & 0.024517 & 0.004702 & 0.13665 & 0.662979 \\ \hline
58 & 5, 7, 11 & 0.067139 & 0.025150 & 0.004632 & 0.138071 & 0.659474 \\ \hline
64 & 5, 11, 13, 17 & 0.053656 & 0.045009 & 0.000036 & 0.110468 & 0.727552 \\ \hline
70 & \begin{tabular}[c]{@{}l@{}}11, 13, 17, 19,\\ 29, 31, 37, 41,\\ 43, 47, 53, 59,\\ 61\end{tabular} & 0.102261 & 0.086419 & $1.81 \times 10^{-20}$ & 0.115448 & 0.715271 \\ \hline
76 & \begin{tabular}[c]{@{}l@{}}7, 11, 13, 17,\\ 23, 29, 31\end{tabular} & 0.096996 & 0.083859 & $1.11 \times 10^{-9}$ & 0.066740 & 0.835398 \\ \hline
82 & 5, 7, 11 & 0.067139 & 0.025331 & 0.004612 & 0.141282 & 0.651555 \\ \hline
88 & 5, 7, 13 & 0.051872 & 0.027621 & 0.002833 & 0.138939 & 0.657333 \\ \hline
94 & 5, 7, 11 & 0.067139 & 0.022306 & 0.004946 & 0.140157 & 0.65433 \\ \hline
100 & \begin{tabular}[c]{@{}l@{}}7, 13, 17, 19,\\ 23, 29, 31, 37\end{tabular} & 0.083152 & 0.071944 & $1.67 \times 10^{-11}$ & 0.112113 & 0.723496 \\ \hline
106 & \begin{tabular}[c]{@{}l@{}}11, 13, 17, 19,\\ 23, 29, 31, 37,\\ 41, 43, 47, 59\end{tabular} & 0.111135 & 0.108798 & $3.52 \times 10^{-19}$ & 0.036080 & 0.911017 \\ \hline
112 & 5, 11, 13, 17 & 0.053656 & 0.039790 & 0.000058 & 0.135377 & 0.666119 \\ \hline
118 & 5, 7, 11 & 0.067139 & 0.02145 & 0.005040 & 0.134016 & 0.669475 \\ \hline
124 & 5, 7, 11 & 0.067139 & 0.025459 & 0.004599 & 0.140627 & 0.65317 \\ \hline
130 & \begin{tabular}[c]{@{}l@{}}7, 11, 17, 19,\\ 23, 29, 31\end{tabular} & 0.071021 & 0.068848 & $1.62 \times 10^{-10}$ & 0.121523 & 0.700289 \\ \hline
136 & \begin{tabular}[c]{@{}l@{}}7, 11, 13, 19,\\ 23, 29, 31\end{tabular} & 0.090439 & 0.084567 & $4.69 \times 10^{-10}$ & 0.066664 & 0.835586 \\ \hline
142 & 5, 7, 11 & 0.067139 & 0.018217 & 0.005397 & 0.140817 & 0.652702 \\ \hline
148 & 5, 11, 13, 17 & 0.053656 & 0.044941 & 0.000036 & 0.110446 & 0.727606 \\ \hline
154 & \begin{tabular}[c]{@{}l@{}}5, 13, 19, 23,\\ 29, 31\end{tabular} & 0.064121 & 0.045715 & $3.11 \times 10^{-8}$ & 0.131059 & 0.676768 \\ \hline
160 & \begin{tabular}[c]{@{}l@{}}7, 11, 13, 17,\\ 19, 23\end{tabular} & 0.083182 & 0.064667 & $8.39 \times 10^{-8}$ & 0.122329 & 0.698299 \\ \hline
166 & \begin{tabular}[c]{@{}l@{}}7, 13, 17, 19,\\ 23, 29, 31, 37,\\ 41\end{tabular} & 0.107845 & 0.089968 & $5.26 \times 10^{-13}$ & 0.056325 & 0.861085 \\ \hline
172 & 5, 7, 11 & 0.067139 & 0.025449 & 0.004599 & 0.138104 & 0.659394 \\ \hline
178 & 5, 7, 11 & 0.067139 & 0.025464 & 0.004598 & 0.140997 & 0.652259 \\ \hline
184 & 5, 7, 11 & 0.067139 & 0.024618 & 0.004691 & 0.140922 & 0.652444 \\ \hline
\end{tabular}
\caption{Lower bounds in Theorem \ref{thm:kpm1mod3} for $k \equiv 1 \pmod{3}$.}
\label{table:k1mod3}
\end{table}

\subsection{Preliminary lemmas}
Before proceeding with the proof of Theorem \ref{thm:kpm1mod3}, 
we require a few preliminary results. 
Certain conditions in Lemmas \ref{lemma:qdivisors} and \ref{lemma:rdivisor}
are slightly more general than necessary.  
This is because they will later be applied when $k \equiv 0 \pmod{3}$ (Section \ref{Section:k0}).
For our present purposes (the proof of Theorem \ref{thm:kpm1mod3}), 
the set $Q$ in the following lemmas is as defined in the preceding section.

\begin{lemma}\label{lemma:qdivisors}
Assume that the Bateman--Horn conjecture holds. Let $k$ be a positive even integer and let
$Q$ be a finite set of primes such that 
\begin{equation*}
\quad
q\nmid k(k+1)
\qquad \text{(resp., $q\nmid k(k-1)$)},
\end{equation*}
for all $q \in Q$. The number of $p \in \Pi_k(x)$ such that 
\begin{equation*}
q |  (p-1)\qquad \text{(resp., $q |  (p+k-1)$)}
\end{equation*}
for all $q \in Q$ is
\begin{equation*}
	\pi_k'(x) \,=\, (1 + o(1))\, \pi_k(x) \prod_{q \in Q} (q-2)^{-1}.
\end{equation*}
\end{lemma}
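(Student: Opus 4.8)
The plan is to recast the divisibility constraints as a single congruence and then invoke the Bateman--Horn conjecture for a pair of linear polynomials. Since the elements of $Q$ are distinct primes, the Chinese Remainder Theorem shows that demanding $q \mid (p-1)$ for every $q \in Q$ is equivalent to the single condition $p \equiv 1 \pmod{N}$, where $N = \prod_{q \in Q} q$; note $N$ is odd, since $k$ even forces $2 \notin Q$. I would parametrize the primes to be counted by writing $p = Nm+1$, so that $p+k = Nm+k+1$, and count the integers $m \leq (x-1)/N$ (which is exactly the condition $p \leq x$) for which the two polynomials
\begin{equation*}
f_1(m) = Nm+1 \qquad \text{and} \qquad f_2(m) = Nm+k+1
\end{equation*}
are simultaneously prime. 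As $m \mapsto Nm+1$ is a bijection between admissible $m$ and the primes $p \equiv 1 \pmod{N}$ we wish to enumerate, this converts the problem into a Bateman--Horn count for $f_1, f_2$.

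I would then verify admissibility and compute $N_f(q)$ for $f = f_1 f_2$. The polynomials are distinct (as $k \neq 0$), irreducible, and have positive leading coefficients. For a prime $q \mid N$ (that is, $q \in Q$) one has $f_1 \equiv 1$ and $f_2 \equiv k+1 \pmod{q}$; the hypothesis $q \nmid k(k+1)$ gives $q \nmid (k+1)$, so neither factor vanishes and $N_f(q) = 0$. For $q \nmid N$ each linear factor has a unique root modulo $q$, and these roots coincide exactly when $q \mid k$; hence $N_f(q) = 1$ if $q \mid k$ and $N_f(q) = 2$ otherwise. In every case $N_f(q) < q$, so $f$ does not vanish identically modulo any prime and the Bateman--Horn conjecture applies.

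With these local factors, the Bateman--Horn constant for $(f_1, f_2)$ is
\begin{equation*}
C = \prod_{q \in Q} \frac{q^2}{(q-1)^2} \prod_{p \mid k} \frac{p}{p-1} \prod_{\substack{p \nmid k \\ p \notin Q}} \frac{p(p-2)}{(p-1)^2}.
\end{equation*}
I would compare this with $C_k$ from Section \ref{Section:BH} factor by factor. Every prime dividing $k$, and every prime outside $Q$ not dividing $k$, contributes identically to $C$ and $C_k$; the only discrepancy arises from $q \in Q$, which (using $q \nmid k$) contributes $q(q-2)/(q-1)^2$ to $C_k$ but $q^2/(q-1)^2$ to $C$. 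Hence $C/C_k = \prod_{q \in Q} q/(q-2)$. Since $\log\big((x-1)/N\big) = \log x + O(1)$, Bateman--Horn yields $\pi_k'(x) \sim C\,(x/N)/(\log x)^2$ while $\pi_k(x) \sim C_k\, x/(\log x)^2$, and therefore
\begin{equation*}
\frac{\pi_k'(x)}{\pi_k(x)} \longrightarrow \frac{C}{N\,C_k} = \frac{1}{N}\prod_{q \in Q}\frac{q}{q-2} = \prod_{q \in Q}\frac{1}{q-2},
\end{equation*}
which is the asserted identity. The ``resp.'' case is symmetric: setting $p+k-1 = Nm$ (so $p = Nm+1-k$), the hypothesis $q \nmid k(k-1)$ replaces $q \nmid k(k+1)$ in forcing $N_f(q) = 0$ for $q \in Q$, and the remaining computation is unchanged.

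The conceptual reduction is clean, so the real work is the bookkeeping in the constant computation: correctly matching the Euler factors of $C$ and $C_k$ across the three cases ($q \in Q$, $q \mid k$, and the remaining primes) and tracking the extra $1/N$ from the change of variables. A secondary point needing care is verifying admissibility at the small prime $q = 2$, where the oddness of $N$ and the evenness of $k$ must be used.
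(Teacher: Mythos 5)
Your proposal is correct and follows essentially the same route as the paper's proof: both reduce the divisibility conditions to the arithmetic progression $p \equiv 1 \pmod{N}$ (the paper writes $a$ for your $N$), apply Bateman--Horn to the pair of linear polynomials $Nm+1$, $Nm+k+1$, compute the same local factors $N_f(q)$, and recover $\prod_{q \in Q}(q-2)^{-1}$ by comparing the resulting constant against $C_k$. The only difference is cosmetic bookkeeping: you form the ratio $C/(N C_k)$ explicitly, while the paper absorbs the $Q$-factors into the Euler product for $\pi_k(x)$ directly.
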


\begin{proof}
Suppose that $q \nmid k (k+1)$ for all $q \in Q$, since the case 
$q \nmid k(k-1)$ is analogous. 
We wish to count the number of $p \in \Pi_k(x)$ such that $q |  (p-1)$ for all $q \in Q$. 
If $a = \prod_{q \in Q}q$, then the desired primes are those of the form
\begin{equation*}
	n = at + 1 \leq x \quad \text{such that} \quad n+k = at + k + 1 \text{ is prime.}
\end{equation*}
Let
\begin{equation*}
	g_1(t) = at + 1, \quad g_2(t) = at+k+1, \quad \text{and } g = g_1g_2.
\end{equation*}
In the Bateman--Horn conjecture with $s$ denoting an arbitrary prime, we have
\begin{equation}\label{eq:ngp}
	N_g(s) = \begin{cases}
		0 \quad \text{if } s \in Q, \\
		1 \quad \text{if } s |  k,\, s \not\in Q, \\
		2 \quad \text{if } s \nmid k,\ s \not\in Q.
	\end{cases}
\end{equation}
For sufficiently large $x$, 
the Bateman--Horn conjecture predicts that the number of such $t \leq (x-1)/a$ is
\begin{align}
	\pi_k'(x)
	&= (1+o(1)) \frac{(x-1)/a}{(\log((x-1)/a))^2} \prod_{s \geq 2} \frac{1-N_g(s)/s}{(1-1/s)^2} \nonumber \\
	& = (1+o(1)) \frac{x}{a(\log x)^2} \prod_{s \geq 2} \frac{1-N_g(s)/s}{(1-1/s)^2} \nonumber \\
	&= (1+o(1)) \frac{x}{a(\log x)^2} \prod_{s \in Q} \frac{1}{(1-1/s)^2} \prod_{s \not\in Q} \frac{1-N_g(s)/s}{(1-1/s)^2} \nonumber \\
	&= (1+o(1)) \frac{x}{a(\log x)^2} \prod_{s \in Q} (1-N_f(s)/s)^{-1} \prod_{s \geq 2} \frac{1-N_f(s)/s}{(1-1/s)^2} \nonumber,
\end{align}
in which $N_f(s)$ refers to \eqref{eq:BHNfp}. Simplifying further yields
\begin{align}
	\pi_k'(x)
	&= (1+o(1)) \frac{\pi_k(x)}{a} \prod_{s \in Q} (1-2/s)^{-1} \nonumber \\
	&\quad= (1+o(1)) \pi_k(x) \prod_{q \in Q} (q-2)^{-1}. \nonumber\qedhere
\end{align}
\end{proof}

\begin{lemma}\label{lemma:rdivisor}
Assume that the Bateman--Horn conjecture holds. Let $k$ be a positive even integer and let
$Q$ be a finite set of primes such that
\begin{equation*}
\quad
q\nmid k(k+1)
\qquad \text{(resp., $q\nmid k(k-1)$)},
\end{equation*}
for all $q \in Q$. Let $r \geq 5$ be a fixed prime not in $Q$ 
that satisfies
\begin{equation*}
r \nmid k(k-1)\qquad \text{(resp., $r \nmid k(k+1)$)}. 
\end{equation*}
The number of $p \in \Pi_k(x)$ such that 
\begin{equation*}
q |  (p-1)
\quad \text{and} \quad
r |  (p+k-1)\qquad \text{(resp., $r |  (p-1)$)},
\end{equation*}
for all $q \in Q$ is
\begin{equation*}
	\pi_{k,r}'(x) = (1+o(1))\, \frac{\pi_k(x)}{r - N_f(r)} \, \prod_{q \in Q} (q-2)^{-1},
\end{equation*}
in which $N_f(r)$ refers to \eqref{eq:BHNfp}.
\end{lemma}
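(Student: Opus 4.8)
The plan is to prove the displayed estimate in exactly the way Lemma \ref{lemma:qdivisors} was proved, treating the two (resp.) cases symmetrically. I will carry out the first case, in which $q \nmid k(k+1)$ for all $q \in Q$, $r \nmid k(k-1)$, and we count those $p \in \Pi_k(x)$ with $q \mid (p-1)$ for all $q \in Q$ together with $r \mid (p+k-1)$. The only new feature relative to Lemma \ref{lemma:qdivisors} is the single additional congruence at $r$, which I fold into the parametrization via the Chinese Remainder Theorem.

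First I would set $a = \prod_{q \in Q} q$ and $M = ar$. Since $r \notin Q$ we have $\gcd(a,r)=1$, so there is a unique residue $c \pmod M$ with $c \equiv 1 \pmod a$ and $c \equiv 1-k \pmod r$. Writing $p = Mt + c$, the conditions $q \mid (p-1)$ for $q \in Q$ and $r \mid (p+k-1)$ then hold automatically, and the remaining requirement is simply that $g_1(t) = Mt+c$ and $g_2(t) = Mt+c+k$ (that is, $p$ and $p+k$) be simultaneously prime. Thus I apply the Bateman--Horn conjecture to $g = g_1 g_2$ over the range $t \leq (x-c)/M$, so that $p \leq x$; one checks in passing that $g$ is admissible, since $N_g(s) \leq 2 < s$ for every prime $s$.

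The heart of the computation is evaluating $N_g(s)$. For $s \in Q$ one has $s \mid M$, so $g_1 \equiv 1$ and $g_2 \equiv 1+k \pmod s$; since $s \nmid k(k+1)$ neither reduction vanishes, giving $N_g(s)=0$. For $s=r$ one has $g_1 \equiv 1-k$ and $g_2 \equiv 1 \pmod r$, and here the hypothesis $r \nmid k(k-1)$, which forces $r \nmid k-1$, is precisely what guarantees $g_1 \not\equiv 0$, so again $N_g(r)=0$. For every remaining prime the polynomials behave as in \eqref{eq:BHNfp}, so $N_g(s) = N_f(s)$. Consequently the singular series factors as
\begin{equation*}
\prod_s \frac{1-N_g(s)/s}{(1-1/s)^2} = \left(\prod_{s \in Q \cup \{r\}} \frac{1}{(1-1/s)^2}\right)\prod_{s \notin Q \cup \{r\}} \frac{1-N_f(s)/s}{(1-1/s)^2},
\end{equation*}
and multiplying and dividing by the missing Euler factors of $C_k = \prod_s (1-N_f(s)/s)(1-1/s)^{-2}$ collapses this to $C_k \prod_{s \in Q \cup \{r\}} (1-N_f(s)/s)^{-1}$. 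Since $r \nmid k$ and $q \nmid k$ for every $q \in Q$, each such factor equals $s/(s-2)$; because $M = r\prod_{q\in Q} q$ cancels the numerators in $\prod_{s \in Q \cup \{r\}} s/(s-2)$, comparison with the Bateman--Horn prediction $\pi_k(x) \sim C_k x/(\log x)^2$ yields the claimed $(1+o(1))\,\pi_k(x)\,(r-N_f(r))^{-1}\prod_{q\in Q}(q-2)^{-1}$.

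The one point that demands care is the evaluation $N_g(r)=0$. Without the extra congruence, $r \nmid k$ would make $r$ contribute $N_f(r)=2$; fixing $p \equiv 1-k \pmod r$ instead removes both $p$ and $p+k$ from the residue class $0 \pmod r$, dropping the local factor to $(1-1/r)^{-2}$ and thereby producing the extra $1/(r-N_f(r)) = 1/(r-2)$ that distinguishes this lemma from Lemma \ref{lemma:qdivisors}. The (resp.) case is handled identically after interchanging the roles of $k+1$ and $k-1$ and of $p-1$ and $p+k-1$; everything else is the routine rearrangement of convergent Euler products already carried out in the proof of Lemma \ref{lemma:qdivisors} and requires no new ideas.
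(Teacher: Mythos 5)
Your proposal is correct and follows essentially the same route as the paper's own proof: you parametrize the primes in question as an arithmetic progression modulo $ar$ (your CRT residue $c$ is exactly the paper's $b_r = aj_0+1$), observe that the hypotheses $q \nmid k(k+1)$ and $r \nmid k(k-1)$ force $N_g(s)=0$ for $s \in Q \cup \{r\}$ while $N_g(s)=N_f(s)$ elsewhere, and collapse the Euler product against $C_k$ to obtain the factor $(r-N_f(r))^{-1}\prod_{q\in Q}(q-2)^{-1}$. The only blemish is your parenthetical admissibility remark ``$N_g(s)\leq 2 < s$ for every prime $s$,'' which as stated fails at $s=2$; what saves it is that $N_g(2)=N_f(2)=1$ because $k$ is even, exactly as your own local computation shows.
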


\begin{proof}
Suppose that $q \nmid (k+1)$ for all $q \in Q$, since the case $q \nmid (k-1)$ is analogous.
Fix a prime $r \geq 5$ such that $r \nmid (k-1)$ and let $a = \prod_{q \in Q}q$.
The desired primes are precisely those of the form
\begin{equation*}
\text{$n = a j+1\leq x$  \quad such that \quad $n+k = aj + k + 1$ is prime
and $r |  (aj + k)$}.
\end{equation*}
In particular, $j$ must be of the form
\begin{equation*}
j = j_0 + r \ell, 
\end{equation*}
in which $j_0$ is the smallest positive integer such that 
$j_0\equiv -k a^{-1}  \pmod r$ (note that $a$ is invertible modulo $r$ since
$r \notin Q$). 
Let $b_{r}=aj_0+1$.  Then
\begin{equation}\label{eq:APr}
n=a r \ell+ b_r
\qquad \text{and}\qquad
n+k=a r\ell+(b_r + k)
\end{equation}
are both prime, $n\leq x$, and
\begin{equation*}
\ell\leq \frac{x-b_{r}}{a r}  .
\end{equation*}
In the Bateman--Horn conjecture, let 
\begin{equation*}
g_1(t)=art+b_{r},  \qquad g_2(t)=a r t+(b_{r}+k),
\quad\text{and}\quad g = g_1g_2.
\end{equation*}
With $s$ denoting an arbitrary prime,
$N_g(s)$ is as in \eqref{eq:ngp} except for $s=r$, in which case $N_g(r) = 0$.
Indeed, 
\begin{equation*}
g_1(t) \equiv b_r \equiv aj_0 + 1 \equiv -k+1 \not\equiv 0\pmod{r}
\quad\text{and}\quad
g_2(t) \equiv b_r + k \equiv 1 \pmod{r}
\end{equation*}
for all $t$.
As $x \to \infty$, the Bateman--Horn conjecture 
predicts that the number of such $\ell$ is
\begin{align}
\pi_{k,r}'(x)
&= (1+o(1)) \frac{(x-b_r)/(a r)}{( \log( (x - b_r)/(a r)))^2 } \prod_{s \geq 2} \left( \frac{1 - N_g(s)/s}{(1 - 1/s)^2} \right) \nonumber \\
&=  (1+o(1)) \frac{x}{ ar (\log x )^2 } \prod_{s \geq 2} \left( \frac{1 - N_g(s)/s}{(1 - 1/s)^2} \right) \nonumber \\
& =  (1+o(1)) \frac{x}{ar (\log x)^2} \prod_{s \in Q \text{ or } s=r} \left( \frac{1}{(1- 1/s)^2} \right) \prod_{s \not\in Q, s \neq r} \left( \frac{1 - N_g(s)/s}{ (1 -1/s)^2} \right) \nonumber\\
&=  (1+o(1)) \frac{x}{ar(\log x)^2} \prod_{s \in Q \text{ or } s=r} (1-N_f(s)/s)^{-1} \prod_{s\geq 2} \left( \frac{1 - N_f(s)/s}{ (1 -1/s)^2} \right)  \nonumber \\
& =  (1+o(1)) \frac{\pi_k(x)}{r-N_f(r)} \prod_{q \in Q} (q-2)^{-1}. \nonumber\qedhere
\end{align}
\end{proof}

\subsection{Proof of Theorem \ref{thm:kpm1mod3}a} 

In light of Theorem \ref{Theorem:Comparison}, we may use $S(p)$ and $T(p)$
interchangeably in what follows.
Suppose that $k \not \equiv 0 \pmod{3}$.  
\begin{itemize}
\item If $\chi_3(k) = -1$, then we wish to count $p \in \Pi_k$ for which $q |  (p-1)$ for all $q \in Q$. 
\item If $\chi_3(k) = 1$, then we wish to count $p \in \Pi_k$ for which $q |  (p+k-1)$ for all $q \in Q$.
\end{itemize}
Because of this slight difference, we define
$\tau_k = k(1 +\chi_3(k)) / 2$.  That is,
\begin{equation}\label{eq:tauk}
	\tau_k = \begin{cases}
		0 & \text{if $\chi_3(k) = -1$}, \\
		k & \text{if $\chi_3(k) = 1$},
	\end{cases}
\end{equation}
so that
\begin{equation*}
	p-1+\tau_k = \begin{cases}
		p-1 & \text{if $\chi_3(k) = -1$}, \\
		p+k-1 & \text{if $\chi_3(k) = 1$}.
	\end{cases}
\end{equation*}
Now let $\pi_k'(x)$ denote the number of primes $p \in \Pi_k(x)$ such that 
$q |  (p-1+\tau_k)$ for all $q \in Q$. 
Lemma \ref{lemma:qdivisors} allows us to count these prime pairs. 
Moreover, Lemma \ref{lemma:rdivisor} permits us to counts such pairs after imposing the 
additional restriction that a 
fixed prime $r \geq 5$ not in $Q$ 
divides $p-1+(k-\tau_k)$, where
\begin{equation*}
	p-1+(k-\tau_k) = \begin{cases}
		p+k-1 & \text{if $\chi_3(k) = -1$}, \\
		p-1 & \text{if $\chi_3(k) = 1$}.
	\end{cases}
\end{equation*}
Let $\pi_{k,r}'(x)$ denote the number of primes $p \in \Pi_k(x)$ such that 
$q |  (p-1+\tau_k)$ for all $q \in Q$, and $r |  (p-1+(k-\tau_k))$.

Suppose that $p$ is counted by $\pi_k'(x)$. The condition $k \not\equiv 0 \pmod{q}$ ensures that 
$q \nmid (p-1+(k-\tau_k))$ for all $q \in Q$.  Thus,
\begin{equation*}
3 |  (p-1+(k-\tau_k)) \qquad \text{and} \qquad 3 \nmid (p-1+\tau_k),
\end{equation*}
so that
\begin{equation*}
	\frac{\phi(p-1+\tau_k)}{p-1+\tau_k} \leq \frac{1}{2} \prod_{q \in Q} \left( 1 - \frac{1}{q} \right).
\end{equation*}
If $\sgn S(p) = -\chi_3(k)$ (so that $p$ does not belong to the set of interest in
Theorem \ref{thm:kpm1mod3}a), then
\begin{align*}
	\frac{1}{3} \prod_{\substack{r |  (p-1+(k-\tau_k)) \\ r \geq 5, r \not\in Q}} \left(1 - \frac{1}{r}\right)
	&\,=\, \frac{\phi(p-1+(k-\tau_k))}{p-1+(k-\tau_k)} \\
	&\,<\, \frac{\phi(p-1+\tau_k)}{p-1+\tau_k} 
	\,\leq\, \frac{1}{2} \prod_{q \in Q} \left( 1 - \frac{1}{q} \right).
\end{align*}
Consequently,
\begin{equation*}
	\prod_{\substack{r |  (p-1+(k-\tau_k)) \\ r \geq 5, r \notin Q}} \left(1+\frac{1}{r-1}\right)
	\,>\, \frac{2}{3} \prod_{q \in Q} \left(1+\frac{1}{q-1}\right),
\end{equation*}
in which $r$ is prime. Let
\begin{equation*}
	F(p) := \sum_{\substack{r |  (p-1+(k-\tau_k)) \\ r \geq 5}} \log\left(1 + \frac{1}{r-1}\right).
\end{equation*}

We want to count primes $p \in \Pi_k(x)$ such that
\begin{equation*}
	F(p) > \log\left[ \frac{2}{3} \prod_{q \in Q} \left(1+\frac{1}{q-1}\right) \right] = L_k
\end{equation*}
and $q |  (p-1+\tau_k)$ for all $q \in Q$. To do this, we first sum up $F(p)$ over all primes $p$ counted by $\pi_{k}'(x)$ and change the order of summation to get
\begin{align}
A(x)
&=\sum_{\substack{\text{$p$ counted by}\\ \pi_{k}'(x)}} F(p) \nonumber\\
&=\sum_{\substack{r \geq 5 \\ r \not\in Q}} \pi_{k,r}'(x)  \log\left(1+\frac{1}{r-1}\right) \nonumber\\
&\leq  \sum_{\substack{5\leq r\leq z \\ r \not\in Q}} \pi_{k,r}'(x) \log \left(1+\frac{1}{r-1}\right)\nonumber\\
&\qquad\qquad  +  \sum_{\substack{z<r\leq (\log x)^3 \\ r \not\in Q}} \pi_{k,r}'(x)\log\left(1+\frac{1}{r-1}\right)\nonumber\\
 & \qquad\qquad\qquad +  \sum_{\substack{(\log x)^3<r\leq x \\ r \not\in Q}} \pi_{k,r}'(x)\log\left(1+\frac{1}{r-1}\right) \nonumber\\
 & = A_1(x)+A_2(x)+A_3(x), \label{eq:AAA}
\end{align}
in which $z$ is a fixed number.  We bound the three summands separately. In what follows, we let 
$\delta > 0$ be small, and fix $z$ large enough so that 
\begin{equation*}
	\frac{8K}{z-2} \prod_{q \in Q} (q-2)^{-1} < \frac{\delta}{3}.	
\end{equation*}

\begin{enumerate} 
\item Suppose that $5\leq r\leq z$ and $r \not\in Q$. 
Lemma \ref{lemma:rdivisor} asserts that if $r \nmid (k+\chi_3(k))$, then
\begin{equation*}
 \pi_{k,r}'(x) = (1+o(1)) \frac{\pi_k(x)}{r-N_f(r)} \prod_{q \in Q} (q-2)^{-1}
\end{equation*}
uniformly for $r \in [5,z] \setminus Q$ as $x \to \infty$. If $r |  (k+\chi_3(k))$ and $r |  (p-1+(k-\tau_k))$, then
\begin{equation*}
	0 \equiv p - 1 + (k-\tau_k) \equiv p - 1 - (\chi_3(k) + \tau_k) \pmod{r}.
\end{equation*}
When $\chi_3(k) = -1$, we have $p \equiv 0 \pmod{r}$. When $\chi_3(k) = 1$, we add $k+\chi_3(k)$ to the middle expression and simplify to get $p+k \equiv 0 \pmod{3}$. In either case, it follows that 
$\pi_{k,r}'(x) \leq 1$ when $r |  (k+\chi_3(k))$. Thus, for sufficiently large $x$ we have
\begin{align*}
A_1(x) 
& \leq  (1+o(1)) \frac{\pi_k(x)}{\prod_{q \in Q} (q-2)} \sum_{\substack{5 \leq r \leq z \\ r \not\in Q, r \nmid (k+\chi_3(k))}} \frac{1}{r-N_f(r)}\log\left( 1 + \frac{1}{r-1} \right) \\
&\qquad + \sum_{\substack{5 \leq r \leq z \\ r \not\in Q, r |  (k+\chi_3(k))}} \log\left( 1 + \frac{1}{r-1} \right) \\
& \leq  (1+o(1)) \frac{R_k \, \pi_k(x)}{\prod_{q \in Q} (q-2)} + \sum_{\substack{5 \leq r \leq z \\ r \not\in Q, r |  (k+\chi_3(k))}} \log\left( 1 + \frac{1}{r-1} \right) \\
& =  \left( (1+o(1))\, \frac{R_k}{\prod_{q \in Q} (q-2)} + \frac{1}{\pi_k(x)} \!\!\!\!\!\!\!\sum_{\substack{5 \leq r \leq z \\ r \not\in Q, r |  (k+\chi_3(k))}} \log\left( 1 + \frac{1}{r-1} \right) \right) \, \pi_k(x) \\
&< \left( \frac{R_k}{\prod_{q \in Q} (q-2)} + \frac{\delta}{3} \right)\, \pi_k(x),
\end{align*}
where the last inequality follows from $\pi_k(x) \to \infty$.

\item Suppose that $z<r\leq (\log x)^3$ and $r \not\in Q$. 
Maintaining the notation $a, b_r$ from the proof of Lemma \ref{lemma:rdivisor}, 
the Brun sieve yields an absolute constant $K$ such that for sufficiently large $x$,
\begin{align*}
	\pi_{k,r}'(x)
	&\leq \frac{K(x+\tau_k-b_r)/ar}{(\log((x+\tau_k-b_r)/ar))^2} \prod_{p \geq 2} \frac{1-N_g(p)/p}{(1-1/p)^2} \\
	&= \frac{C_k K(x+\tau_k-b_r)}{(r-N_f(r)) (\log((x+\tau_k-b_r)/ar))^2} \prod_{q \in Q}(q-2)^{-1} \\
	&\leq \frac{2 C_k K x}{(r-N_f(r)) (\log((x-b_r)/ar))^2} \prod_{q \in Q}(q-2)^{-1} \\
	&\leq \frac{2 C_k Kx}{(r-N_f(r)) (\log(x/ar - 1))^2} \prod_{q \in Q}(q-2)^{-1},
\end{align*}
where the last inequality follows from the fact that $b_r \leq ar$. Since $r \leq (\log x)^3$,
\begin{equation*}
	\log(x/ar-1) \geq \log(x^{1/2}) = (\log x)/2
\end{equation*}
for large enough $x$. Thus,
\begin{align*}
	\pi_{k,r}'(x)
	&\leq \frac{8C_kKx}{(r-N_f(r)) (\log x)^2} \prod_{q \in Q}(q-2)^{-1} \\
	&\leq \frac{8K\pi_k(x)}{r-2} \prod_{q \in Q} (q-2)^{-1}
\end{align*}
for sufficiently large $x$. 
Since $\log(1+t)<t$ for $t>0$, for sufficiently large $x$ we obtain
\begin{align*}
A_2(x) 
&= \sum_{\substack{z<r\leq (\log x)^3 \\ r \not\in Q}} \pi_{k,r}'(x)\log\left(1+\frac{1}{r-1}\right) \\
& \leq  \frac{8K \pi_k(x)}{\prod_{q \in Q} (q-2)} \sum_{r>z} \frac{1}{r-2} \log\left(1+\frac{1}{r-1}\right) \\
& <   \frac{8K \pi_k(x)}{\prod_{q \in Q} (q-2)} \sum_{r>z} \frac{1}{(r-2)(r-1)}\\
&  = \frac{8K \pi_k(x)}{\prod_{q \in Q} (q-2)}  \sum_{r>z} \left(\frac{1}{r-2}-\frac{1}{r-1}\right)\\
& \leq  \frac{8K \pi_k(x)}{(z-2) \prod_{q \in Q} (q-2)} \\
&< \frac{\delta}{3} \,\pi_k(x).
\end{align*}

\item Suppose that $(\log x)^3<r\leq x$ and $r \not\in Q$. 
By \eqref{eq:APr}, the primes counted by $\pi_{k,r}'(x)$ lie in 
an arithmetic progression modulo $ar$, with $a$ defined as in Lemma \ref{lemma:rdivisor}.  Thus, their number is at most 
\begin{equation*}
\pi_{k,r}'(x)\leq \left\lfloor \frac{x}{a r}\right\rfloor+1\leq \frac{x}{a r}+1.
\end{equation*}
Since $\log(1+t)<t$, for sufficiently large $x$ we obtain
\begin{align*}
A_3(x) 
&= \sum_{\substack{(\log x)^3<r\leq x \\ r \not\in Q}} \pi_{k,r}'(x)\log\left(1+\frac{1}{r-1}\right) \\
& \leq   \sum_{\substack{(\log x)^3<r\leq x \\ r \not\in Q}} \frac{1}{(r-1)} \left(\frac{x}{a r}+1\right)\\
& \leq  \frac{x}{a} \sum_{r>(\log x)^3} \frac{1}{r(r-1)}+\sum_{(\log x)^3<r\leq x} \frac{1}{r-1}\\
& \leq  \frac{x}{a} \sum_{r > (\log x)^3} \left(\frac{1}{r-1}-\frac{1}{r}\right)+\int_{(\log x)^3-2}^{x} \frac{dt}{t}\\
& \leq  \frac{x}{a((\log x)^3-1)}+\left(\log t\Big|_{t=(\log x)^3-2}^{t=x}\right)\\
& \leq  \frac{x}{a ((\log x)^3-1)}+\log x \\
& = (1+o(1))\left( \frac{x}{a(\log x)^3} + \log x \right) \\
&= o(1)\, \pi_k(x) \\
&< \frac{\delta}{3}\, \pi_k(x).
\end{align*}
\end{enumerate}

Returning to \eqref{eq:AAA} and using the preceding three estimates, we have
\begin{align*}
A(x) 
& =  A_1(x)+A_2(x)+A_3(x)\\
& <   \left( \frac{ R_k}{\prod_{q \in Q} (q-2)} + \delta \right) \pi_k(x)
\end{align*}
for sufficiently large $x$. Let $\mathcal{U}(x)$ be the set of primes $p$ counted by $\pi_k'(x)$ with $\sgn S(p) = -\chi_3(k)$, so that $p$ does not belong to the set of interest in 
Theorem \ref{thm:kpm1mod3}a. As we have seen,
if $p\in \mathcal{U}(x)$, then
\begin{equation*}
	F(p) > L_k.
\end{equation*}
Thus,
\begin{align*}
	0 
	&\leq \#\mathcal{U}(x) \, L_k \\
	&< \sum_{p\in \mathcal{U}(x)} F(p)\leq A(x) \\
	&< \left( \frac{ R_k}{\prod_{q \in Q} (q-2)} + \delta \right) \pi_k(x),
\end{align*}
from which we deduce that
\begin{equation*}
	\# \mathcal{U}(x)
	< \left(\frac{R_k \prod_{q \in Q} (q-2)^{-1} + \delta}{L_k} \right)\, \pi_k(x).
\end{equation*}
The primes $p$ counted by $\pi_{k}'(x)$ which are not in $\mathcal{U}(x)$ satisfy $\sgn S(p) = \chi_3(k)$. By Lemma \ref{lemma:qdivisors} and the preceding calculation,
for large $x$ there are at least
\begin{align*}
\pi_k'(x)-\#\mathcal{U}(x) 
& >  \left( (1+o(1))\, \prod_{q \in Q} (q-2)^{-1} - \frac{R_k \prod_{q \in Q} (q-2)^{-1} + \delta}{L_k} \right) \pi_k(x) \\
&= \prod_{q \in Q} (q-2)^{-1} \left( 1 - \frac{R_k}{L_k} - \epsilon \right) \pi_k(x)
\end{align*}
such primes, where $\epsilon > 0$ can be made arbitrarily small by taking $x$ large enough. The condition \eqref{eq:thmcond} ensures that the quantity in parentheses is positive for a small enough $\epsilon$. 
By Theorem \ref{Theorem:Comparison}, the set of $p \in \Pi_k$ for which
\begin{equation*}
	\sgn S(p) = \sgn T(p)
\end{equation*}
has full density as a subset of $\Pi_k$. 
It follows that the set of prime pairs for which $\sgn T(p) = \chi_3(k)$ has lower density
\begin{align*}
	\liminf_{x \to \infty} \frac{ \{ p \in \Pi_k(x) :  \sgn T(p)=\chi_3(k) \} }{\pi_k(x)} \geq \prod_{q \in Q} (q-2)^{-1} \left( 1 - \frac{R_k}{S_k} - \epsilon \right)  + o(1).
\end{align*}
Because this holds for all $\epsilon > 0$, the lower bound in Theorem \ref{thm:kpm1mod3}a follows.

\subsection{Proof of Theorem \ref{thm:kpm1mod3}b}
As before, we may use $S(p)$ and $T(p)$
interchangeably in what follows.
Fix $k$ satisfying $\chi_3(k) = \pm 1$ and let $r \geq 5$ be prime. 
We wish to count the number of $p \in \Pi_k(x)$ for which $r | ( p-1+\tau_k )$. 

If $r |  (k - \chi_3(k))$, then 
\begin{equation*}
	p - 1 + \tau_k \pm (k - \chi_3(k)) \equiv 0 \pmod{r}.
\end{equation*}
Consequently, \eqref{eq:tauk} permits us to deduce that
$p+k \equiv 0 \pmod{r}$ of $p \equiv 0 \pmod{r}$.
In either case, 
there is at most one such prime $p$. 

Now suppose that $r \nmid (k-\chi_3(k))$ and let
\begin{equation*}
	g_1(t) = rt + 1 - \tau_k, \quad g_2(t) = rt + 1 + (k-\tau_k), \quad \text{and} \quad g = g_1g_2.
\end{equation*}
Then
\begin{equation*}
	N_g(p) = \begin{cases}
		0 & \text{if } p = r, \\
		1 & \text{if } p |  k, \\
		2 & \text{if } p \nmid k,
	\end{cases}
\end{equation*}
so the Bateman--Horn conjecture gives
\begin{align}
	\sum_{\substack{p \in \Pi_k(x) \\ p+\tau_k \equiv 1 \pmod{r} \\ r \nmid (k-\chi_3(k))}} 1
	\;&=\; (1+o(1)) \frac{(x+\tau_k-1)/r}{(\log ((x+\tau_k-1)/r))^2} \prod_{p \geq 2} \frac{1-N_g(p)/p}{(1-1/p)^2} \nonumber \\
	&= (1+o(1)) \frac{x}{r(\log x)^2} \cdot \frac{1}{(1-1/r)^2} \prod_{p \ne r} \frac{1-N_g(p)/p}{(1-1/p)^2} \nonumber \\
	&= (1+o(1)) \frac{x}{(\log x)^2} \cdot \frac{1}{r - N_f(r)} \prod_{p \geq 2} \frac{1-N_f(p)/p}{(1-1/p)^2} \nonumber \\
	&= (1+o(1))\frac{\pi_k(x)}{r-N_f(r)}, \label{eq:p1modr2}
\end{align}
in which $N_f(r)$ refers to \eqref{eq:BHNfp}. If $\sgn S(p) = \chi_3(k)$, so that $p$ does 
not belong to the set of interest in Theorem \ref{thm:kpm1mod3}b, then
\begin{equation*}
	\frac{1}{2} \prod_{\substack{r |  (p-1+\tau_k) \\ r \geq 5}} \left(1 - \frac{1}{r}\right)
	\,=\, \frac{\phi(p-1+\tau_k)}{p-1+\tau_k}
	\,<\, \frac{\phi(p-1+(k-\tau_k))}{p-1+(k-\tau_k)}
	\,\leq\, \frac{1}{3},
\end{equation*}
because $3 \nmid (p-1+\tau_k)$ and $3 |  (p-1+(k-\tau_k))$. If
\begin{equation*}
	G(p)
	:= \sum_{\substack{r | (p-1+\tau_k) \\ r \geq 5}} \log\left( 1 + \frac{1}{r-1} \right),
\end{equation*}
then $G(p) > \log(3/2)$ whenever $p, p+k$ are primes that satisfy
$\sgn S(p) = \chi_3(k)$. Let $\pi_k''(x)$ denote the number of primes $p \in \Pi_k(x)$ 
for which $\sgn S(p) > \chi_3(k)$. 
For sufficiently large $x$, \eqref{eq:p1modr2} implies that
\begin{align*}
	\pi_k''(x) \log(3/2)
	&< \sum_{p \in \Pi_k(x)} G(p) \\
	&= \sum_{p \in \Pi_k(x)} \; \sum_{\substack{r |  (p-1+\tau_k) \\ r \geq 5}} \log\left( 1 + \frac{1}{r-1} \right) \\
	&\leq \sum_{5 \leq r \leq x} \log\left( 1 + \frac{1}{r-1} \right) \;  \sum_{\substack{p \in \Pi_k(x) \\ p+\tau_k \equiv 1 \pmod{r}}} 1 \\
	&\leq \sum_{\substack{5 \leq r \leq x \\ r \nmid (k-\chi_3(k))}} \log\left( 1 + \frac{1}{r-1} \right) \;  \sum_{\substack{p \in \Pi_k(x) \\ p+k \equiv 1 \pmod{r}}} 1 \\
	&\qquad + \sum_{\substack{5 \leq r \leq x \\ r |  (k-\chi_3(k))}} \log\left( 1 + \frac{1}{r-1} \right) \\
	&\leq \bigg[ (1+o(1)) \sum_{\substack{r \geq 5 \\ r \nmid (k-\chi_3(k))}} \frac{1}{r-N_f(r)}\log\left(1 + \frac{1}{r-1}\right) \\
	&\qquad + \frac{1}{\pi_k(x)}\sum_{\substack{r \geq 5 \\ r |  (k-\chi_3(k))}} \log\left( 1 + \frac{1}{r-1} \right) \bigg] \, \pi_k(x) \\
	&= \left( \sum_{\substack{r \geq 5 \\ r \nmid(k-\chi_3(k))}} \frac{1}{r-N_f(r)}\log\left(1 + \frac{1}{r-1}\right) + o(1) \right) \, \pi_k(x) \\
	&= (R_k' + o(1))\, \pi_k(x).
\end{align*}
Thus, there are at least
\begin{align*}
	\pi_k(x) - \pi_k''(x) > \pi_k(x)\left( 1 - \frac{R_k'}{\log(3/2)} - o(1) \right)
\end{align*}
primes $p \in \Pi_k(x)$ such $\sgn S(p) = -\chi_3(k)$. 
Reasoning similar to that used in 
the conclusion of the proof of part (a) yield the formula in Theorem \ref{thm:kpm1mod3}b.

To show that this lower density is bounded below by $0.6515$, we observe that\footnote{The terms of $R_k'$ are $O(1/r^2)$, since $t < \log(1+t)$ for $t>0$, so the series converges.
\texttt{Mathematica} provides the numerical value $0.141298112$.}
\begin{align*}
	R_k'
	&\,=\, \sum_{\substack{r \geq 5 \\ r \nmid(k-\chi_3(k))}} \frac{1}{r-N_f(r)}\log\left(1 + \frac{1}{r-1}\right) \\
	&\,\leq\, \sum_{r \geq 5} \frac{1}{r-2}\log\left(1 + \frac{1}{r-1}\right) \\
	&\,<\, 0.1412981.
\end{align*}
It follows that
\begin{align*}
	1 - \frac{R_k'}{\log(3/2)}
	\,>\, 1 - \frac{0.1412981}{\log(3/2)}
	\,>\,  0.6515.
\end{align*}
This completes the proof of Theorem \ref{thm:kpm1mod3}.\qed

\section{Extending Theorem \ref{thm:kpm1mod3} to Pairs $p, p+k$ with $k \equiv 0 \pmod{3}$}
\label{Section:k0}
Fix $k \equiv 0 \pmod{3}$.  
The techniques used in the proof of Theorem \ref{thm:kpm1mod3}a can be used to show that 
$T(p) < 0$ and $T(p) > 0$ both occur with positive density as a subset of $\Pi_k$.
Because the proofs are nearly identical, we simply point out the small differences and
leave the remaining details to the reader. 

Since $\chi_3(k) = 0$ whenever $k \equiv 0 \pmod{3}$, some notational adjustment is needed. 
To show that $T(p) < 0$ occurs with positive density in $\Pi_k$, 
we follow the proof of Theorem \ref{thm:kpm1mod3}a as if $k \equiv -1 \pmod{3}$, 
replacing each occurrence of $\chi_3(k)$ with $-1$.
Similarly, to show that $T(p) > 0$ occurs with positive density, we follow the proof as if
$k \equiv 1 \pmod{3}$, replacing $\chi_3(k)$ with $1$.

We modify the definition of $L_k$ by setting
\begin{equation*}
	L_k^{\pm} := \log\left[ \prod_{q \in Q^{\pm}} \left(1 + \frac{1}{q-1} \right) \right] ,
\end{equation*}
in which $Q^{\pm}$ are finite sets of primes to be determined shortly.
Note the absence of the $2/3$ factor inside the logarithm. 
This is due to the fact that $3$ either divides both $p-1$ and $p+k-1$, or it divides neither.
Consequently, the usual $2/3$ from \eqref{eq:Lk} is ``canceled" when we compare 
$\phi(p-1)/(p-1)$ and $\phi(p+k-1)/(p+k-1)$.  This is also the reason why
we cannot employ the techniques from the proof Theorem \ref{thm:kpm1mod3}b to establish a lower density greater than $0.5$ when $k \equiv 0 \pmod{3}$.  This is not surprising, since
Table \ref{Table:PrimePairList} demonstrates that
there is no universal bias in the sign of $T(p)$ that applies for all $k \equiv 0 \pmod{3}$.

Next, we let 
\begin{equation*}
	R_k^{\pm} = \sum_{\substack{r \geq 5 \\ r \not\in Q^{\pm} \\ r \nmid (k\pm 1)}} \frac{1}{r-N_f(r)} \log\left( 1 + \frac{1}{r-1} \right),
\end{equation*}
in which the signs are chosen depending on whether we wish to prove
$T(p) > 0$ or $T(p) < 0$.  We define $Q^{\pm}$ to be the smallest ordered subset of primes for which
$q \nmid k(k \mp 1)$ for all $q \in Q^{\pm}$ and such that
\begin{equation*}
	L_k^{\pm} > R_k^{\pm}.
\end{equation*}
Beyond the aforementioned, the only other difference in the proof is the absence of the $2/3$ factor when comparing $\phi(p-1)/(p-1)$ and $\phi(p+k-1)/(p+k-1)$. With this in mind, we have the following result.

\begin{theorem}\label{thm:k0mod3}
Assume that the Bateman--Horn conjecture holds. If $k \equiv 0 \pmod{3}$, then
the set of primes $p \in \Pi_k$ for which
\begin{equation*}
	\sgn T(p) = \pm 1
\end{equation*}
has lower density (as a subset of $\Pi_k$) at least
\begin{equation*}
	\prod_{q \in Q^{\pm}} (q-2)^{-1} \left(1 - \frac{R_k^{\pm}}{L_k^{\pm}} \right) > 0.
\end{equation*}
\end{theorem}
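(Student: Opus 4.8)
The plan is to adapt the proof of Theorem \ref{thm:kpm1mod3}a almost verbatim, treating the ``$+$'' case (which yields $T(p)>0$) as if $\chi_3(k)=1$ and the ``$-$'' case (which yields $T(p)<0$) as if $\chi_3(k)=-1$; by symmetry I would only write out the ``$+$'' case in full. By Theorem \ref{Theorem:Comparison} I may again work with $S(p)$ in place of $T(p)$. By Lemma \ref{lemma:qdivisors} (its ``resp.'' form, which applies since $q\nmid k(k-1)$ for all $q\in Q^+$), the number $\pi_k'(x)$ of $p\in\Pi_k(x)$ with $q\mid(p+k-1)$ for every $q\in Q^+$ is $(1+o(1))\,\pi_k(x)\prod_{q\in Q^+}(q-2)^{-1}$. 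The ``bad'' set $\mathcal{U}^+(x)$ consists of those $p$ additionally satisfying $S(p)<0$, i.e.\ $\phi(p-1)/(p-1)<\phi(p+k-1)/(p+k-1)$.

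The single genuinely new ingredient is the derivation of the threshold inequality, and this is exactly where the factor $2/3$ disappears. Since $k\equiv0\pmod3$ and both $p-1$ and $p+k-1$ are even, the prime $2$ divides both, while $3$ divides $p-1$ precisely when it divides $p+k-1$; hence the factors $(1-\tfrac12)$ and (present or absent in both) $(1-\tfrac13)$ are common to $\phi(p-1)/(p-1)$ and $\phi(p+k-1)/(p+k-1)$ and cancel in the comparison. Thus $S(p)<0$ is equivalent to
\begin{equation*}
\prod_{\substack{r\mid(p-1)\\ r\geq5}}\Bigl(1-\frac1r\Bigr)<\prod_{\substack{r\mid(p+k-1)\\ r\geq5}}\Bigl(1-\frac1r\Bigr).
\end{equation*}
Because each $q\in Q^+$ divides $p+k-1$ and satisfies $q\nmid k$ (so $q\nmid(p-1)$), the right-hand product is at most $\prod_{q\in Q^+}(1-\tfrac1q)$, whence
\begin{equation*}
F(p):=\sum_{\substack{r\mid(p-1)\\ r\geq5}}\log\Bigl(1+\frac1{r-1}\Bigr)>\sum_{q\in Q^+}\log\Bigl(1+\frac1{q-1}\Bigr)=L_k^{+}.
\end{equation*}
In the $k\not\equiv0$ analysis the corresponding known prefactors of the two totient ratios were $1/3$ and $1/2$ (since $3$ was forced to divide exactly one of $p-1,p+k-1$), and their ratio produced the $2/3$ inside $L_k$; here the prefactors coincide, which is exactly why $L_k^{\pm}$ omits that factor.

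From this point the argument is identical to that of Theorem \ref{thm:kpm1mod3}a. I would sum $F(p)$ over all $p$ counted by $\pi_k'(x)$ and interchange the order of summation to obtain $A(x)=\sum_{r\geq5,\,r\notin Q^+}\pi_{k,r}'(x)\log(1+\tfrac1{r-1})$, then split the range of $r$ into $[5,z]$, $(z,(\log x)^3]$, and $((\log x)^3,x]$. Lemma \ref{lemma:rdivisor} (resp.\ version) evaluates $\pi_{k,r}'(x)$ on the first range and produces $R_k^{+}$ (the finitely many $r\mid(k+1)$ contribute at most one prime each and are negligible), while the Brun sieve and a crude arithmetic-progression count dispose of the other two ranges, each bounded by $\tfrac{\delta}{3}\pi_k(x)$. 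Combining gives $A(x)<\bigl(R_k^{+}\prod_{q\in Q^+}(q-2)^{-1}+\delta\bigr)\pi_k(x)$; since every $p\in\mathcal U^+(x)$ satisfies $F(p)>L_k^{+}$, we have $\#\mathcal U^+(x)\,L_k^{+}\leq A(x)$, and subtracting the resulting bound from $\pi_k'(x)$ and letting $\delta\to0$ leaves at least $\prod_{q\in Q^+}(q-2)^{-1}\bigl(1-R_k^{+}/L_k^{+}\bigr)\pi_k(x)$ primes with $S(p)>0$, which is positive by the defining condition $L_k^{+}>R_k^{+}$. Theorem \ref{Theorem:Comparison} then transfers this to $T(p)>0$, and the ``$-$'' case follows by interchanging the roles of $p-1$ and $p+k-1$ throughout (counting $q\mid(p-1)$ for $q\in Q^-$ and invoking $R_k^{-}$). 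The main obstacle is precisely the cancellation step: one must verify carefully that divisibility by $2$ and by $3$ is synchronized between $p-1$ and $p+k-1$, so that no residual $2/3$ survives and the divergence of $\prod_{q}(1+\tfrac1{q-1})$ against the convergence of $R_k^{\pm}$ still guarantees a valid finite choice of $Q^{\pm}$.
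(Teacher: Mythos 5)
Your proposal is correct and takes essentially the same route as the paper, which likewise proves Theorem \ref{thm:k0mod3} by rerunning the proof of Theorem \ref{thm:kpm1mod3}a with $\chi_3(k)$ replaced by $\pm 1$, invoking the ``resp.''\ forms of Lemmas \ref{lemma:qdivisors} and \ref{lemma:rdivisor}, and dropping the $2/3$ factor from $L_k$ precisely because $3 \mid k$ forces $3$ to divide both of $p-1$ and $p+k-1$ or neither. Your explicit derivation of the threshold inequality $F(p) > L_k^{\pm}$ (and the observation that the finitely many $r \mid (k\pm 1)$ contribute at most one prime each) is exactly the cancellation the paper asserts but leaves to the reader.
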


Table \ref{table:k0mod3} provides 
numerical values for $R_k^{\pm}$, $L_k^{\pm}$, 
and the bounds in Theorem \ref{thm:k0mod3} for various values of $k \equiv 0 \pmod{3}$.

\begin{table}\footnotesize
\centering
\begin{tabular}{c|c|l|l|l||c|l|l|l|l|}
\cline{2-9}
 & \multicolumn{4}{c||}{} & \multicolumn{4}{c|}{} \\[-1em]
 & \multicolumn{4}{c||}{$\chi_3(k)=-1$} & \multicolumn{4}{c|}{$\chi_3(k)=1$} \\ \hline
\multicolumn{1}{|c|}{$k$} & \multicolumn{1}{c|}{$Q^-$} & \multicolumn{1}{c|}{$L_k^-$} & \multicolumn{1}{c|}{$R_k^-$} & \multicolumn{1}{c||}{\begin{tabular}[c]{@{}c@{}}Lwr Bd:\\ $T(p)<0$\end{tabular}} & \multicolumn{1}{c|}{$Q^+$} & \multicolumn{1}{c|}{$L_k^+$} & \multicolumn{1}{c|}{$R_k^+$} & \multicolumn{1}{c|}{\begin{tabular}[c]{@{}c@{}}Lwr Bd:\\ $T(p)>0$\end{tabular}} \\ \hline\hline
\multicolumn{1}{|c|}{6} & 5 & 0.223144 & 0.066917 & 0.233372 & 7 & 0.154151 & 0.110468 & 0.056675 \\ \hline
\multicolumn{1}{|c|}{12} & 5 & 0.223144 & 0.056327 & 0.249192 & 5 & 0.223144 & 0.059640 & 0.244242 \\ \hline
\multicolumn{1}{|c|}{18} & 5 & 0.223144 & 0.062875 & 0.23941 & 5 & 0.223144 & 0.063737 & 0.238123 \\ \hline
\multicolumn{1}{|c|}{24} & 7 & 0.154151 & 0.108351 & 0.059422 & 5 & 0.223144 & 0.066917 & 0.233372 \\ \hline
\multicolumn{1}{|c|}{30} & 7 & 0.154151 & 0.090573 & 0.082487 & 7 & 0.154151 & 0.090742 & 0.082268 \\ \hline
\multicolumn{1}{|c|}{36} & 5 & 0.223144 & 0.036087 & 0.279427 & 11, 13 & 0.175353 & 0.122649 & 0.003035 \\ \hline
\multicolumn{1}{|c|}{42} & 5 & 0.223144 & 0.061145 & 0.241994 & 5 & 0.223144 & 0.061205 & 0.241905 \\ \hline
\multicolumn{1}{|c|}{48} & 5 & 0.223144 & 0.066439 & 0.234086 & 5 & 0.223144 & 0.036087 & 0.279427 \\ \hline
\multicolumn{1}{|c|}{54} & 7 & 0.154151 & 0.110094 & 0.057159 & 5 & 0.223144 & 0.056327 & 0.249192 \\ \hline
\multicolumn{1}{|c|}{60} & 7 & 0.154151 & 0.091573 & 0.081190 & 7 & 0.154151 & 0.091593 & 0.081164 \\ \hline
\multicolumn{1}{|c|}{66} & 5 & 0.223144 & 0.058581 & 0.245824 & 7 & 0.154151 & 0.109178 & 0.058349 \\ \hline
\multicolumn{1}{|c|}{72} & 5 & 0.223144 & 0.066711 & 0.233679 & 5 & 0.223144 & 0.066723 & 0.233663 \\ \hline
\multicolumn{1}{|c|}{78} & 5 & 0.223144 & 0.024890 & 0.296152 & 5 & 0.223144 & 0.066145 & 0.234525 \\ \hline
\multicolumn{1}{|c|}{84} & 11, 13 & 0.175353 & 0.118143 & 0.003295 & 5 & 0.223144 & 0.057737 & 0.247086 \\ \hline
\multicolumn{1}{|c|}{90} & 11, 17 & 0.155935 & 0.107941 & 0.002279 & 7 & 0.154151 & 0.084596 & 0.090242 \\ \hline
\multicolumn{1}{|c|}{96} & 5 & 0.223144 & 0.063737 & 0.238123 & 7 & 0.154151 & 0.110359 & 0.056816 \\ \hline
\multicolumn{1}{|c|}{102} & 5 & 0.223144 & 0.066564 & 0.2339 & 5 & 0.223144 & 0.066568 & 0.233894 \\ \hline
\multicolumn{1}{|c|}{108} & 5 & 0.223144 & 0.066828 & 0.233506 & 5 & 0.223144 & 0.066831 & 0.233501 \\ \hline
\multicolumn{1}{|c|}{114} & 7 & 0.154151 & 0.110211 & 0.057008 & 5 & 0.223144 & 0.064624 & 0.236798 \\ \hline
\multicolumn{1}{|l|}{120} & 7 & 0.154151 & 0.087831 & 0.086045 & 11, 13 & 0.175353 & 0.104836 & 0.004062 \\ \hline
\multicolumn{1}{|l|}{126} & 5 & 0.223144 & 0.061779 & 0.241048 & 11, 13 & 0.175353 & 0.11823 & 0.003290 \\ \hline
\multicolumn{1}{|l|}{132} & 5 & 0.223144 & 0.065799 & 0.235043 & 5 & 0.223144 & 0.031847 & 0.28576 \\ \hline
\multicolumn{1}{|l|}{138} & 5 & 0.223144 & 0.066766 & 0.233597 & 5 & 0.223144 & 0.066768 & 0.233595 \\ \hline
\multicolumn{1}{|l|}{144} & 7 & 0.154151 & 0.092601 & 0.079856 & 5 & 0.223144 & 0.065617 & 0.235314 \\ \hline
\multicolumn{1}{|l|}{150} & 7 & 0.154151 & 0.091827 & 0.080860 & 7 & 0.154151 & 0.091828 & 0.080859 \\ \hline
\multicolumn{1}{|l|}{156} & 5 & 0.223144 & 0.065180 & 0.235967 & 7 & 0.154151 & 0.10982 & 0.0575156 \\ \hline
\end{tabular}
\caption{Lower bounds from Theorem \ref{thm:k0mod3} for $k \equiv 0 \pmod{3}$.}
\label{table:k0mod3}
\end{table}

\bibliographystyle{plain}

\bibliography{PRBPP1ENTB}

\end{document}